\providecommand{\KK}{{\mathbb{K}}}
\providecommand{\RR}{{\mathbb{R}}}
\providecommand{\cH}{{\mathscr{H}}}
\providecommand{\cM}{{\mathscr{M}}}
\providecommand{\cP}{{\mathscr{P}}}
\providecommand{\cU}{{\mathscr{U}}}
\providecommand{\vones}{\mathbbm{1}}
\providecommand{\SetOf}[2]{\left\{#1\vphantom{#2}\,\right.\left|\,\vphantom{#1}#2\right\}}
\providecommand{\smallSetOf}[2]{\{#1\,|\,#2\}}
\providecommand{\bigSetOf}[2]{\bigl\{#1\,\big|\,#2\bigr\}}
\providecommand{\GeneratedBy}[2]{\left\langle#1\vphantom{#2}\,\right.\left|\,\vphantom{#1}#2\right\rangle}
\DeclareMathOperator{\conv}{conv}
\DeclareMathOperator{\Sym}{Sym}
\DeclareMathOperator{\initial}{in}
\providecommand\polymake{\texttt{polymake}\xspace}
\providecommand\cprime{$'$}
\providecommand{\TA}{{\mathbb{T}}}
\providecommand{\TP}{{\mathbb{TP}}}
\providecommand{\trop}{{\rm trop}}
\DeclareMathOperator{\tconv}{tconv}
\DeclareMathOperator{\tdet}{tdet}
\DeclareMathOperator{\tsgn}{tsgn}
\DeclareMathOperator{\type}{type}
\providecommand{\MatroidPolytope}[1]{P_{#1}}
\theoremstyle{plain}
\newtheorem{theorem}{Theorem}
\newtheorem{proposition}[theorem]{Proposition}
\newtheorem{lemma}[theorem]{Lemma}
\theoremstyle{definition}
\newtheorem{example}[theorem]{Example}
\newtheorem{remark}[theorem]{Remark}
\begin{document}

\title{Tropical Convex Hull Computations}

\author{Michael Joswig}
\address{Fachbereich Mathematik, TU Darmstadt, and Institut f\"ur Mathematik, TU Berlin, Germany}
\thanks{This research is supported by DFG Research Unit ``Polyhedral Surfaces''.}
\email{joswig@mathematik.tu-darmstadt.de}

\date{\today}

\begin{abstract}
  This is a survey on tropical polytopes from the combinatorial point of view and with a
  focus on algorithms.  Tropical convexity is interesting because it relates a number of
  combinatorial concepts including ordinary convexity, monomial ideals, subdivisions of
  products of simplices, matroid theory, finite metric spaces, and the tropical
  Grassmannians.  The relationship between these topics is explained via one running
  example throughout the whole paper.  The final section explains how the new version
  2.9.4 of the software system \polymake can be used to compute with tropical polytopes.
\end{abstract}

\maketitle

\section{Introduction}

The study of tropical convexity, also known as ``max-plus convexity'', has a long
tradition going back at least to Vorobyev~\cite{Vorbyev67} and
Zimmermann~\cite{Zimmermann77}; see also
\cite{Cunninghame-Green79,LitvinovMaslovShpiz01,CohenGaubertQuadrat05} and the references
there.  Develin and Sturmfels contributed an inherently combinatorial view on the subject,
and they established the link to tropical geometry \cite{DevelinSturmfels04}.  The
subsequent development includes research on how tropical convexity parallels classical
convexity \cite{Joswig05,ArdilaDevelin07,JoswigKulas08,GaubertMeunier08} and linear
algebra \cite{Sergeev07} as well as investigations on the relationship to commutative
algebra \cite{BlockYu06,DevelinYu06}.  Most recently, topics in algebraic and arithmetic
geometry came into focus
\cite{KeelTevelev06,Speyer06,JoswigSturmfelsYu07,HerrmannJensenJoswigSturmfels08}.  The
purpose of this paper is to list known algorithms in this area and to explain how the
connections between the various topics work.  We emphasize the aspects of geometric
combinatorics.

Our version of the \emph{tropical semi-ring} is $(\RR,\min,+)$, and we usually write
``$\oplus$'' instead of ``$\min$'' and ``$\odot$'' instead of ``$+$''.  Of course, it is
just a matter of taste if one prefers ``$\max$'' over ``$\min$''.  Whenever convenient we
will augment the semi-ring with the additively neutral element $\infty$ which is absorbing
with respect to $\odot$.  The \emph{tropical $d$-torus} is the set
$\TA^{d-1}:=\RR^d/\RR\vones_d$, where $\vones_d:=(1,1,\dots,1)$ is the all-ones-vectors of
length~$d$.  Via the maximum-norm on $\RR^d$ and the quotient topology the tropical torus
$\TA^{d-1}$ carries a natural topology which is homeomorphic to $\RR^{d-1}$.
Componentwise tropical addition and tropical scalar multiplication turn $\RR^d$ into a
semi-module, and since tropical scalar multiplication with $\lambda$ is the same as the
ordinary addition of the vector $\lambda\cdot\vones_d$ tropical linear combinations of
elements in $\TA^{d-1}$ are well-defined.  For a set $V\subset\TA^{d-1}$ of
\emph{generators} the \emph{tropical convex hull} is defined as
\[
\tconv V \ := \ \SetOf{\lambda\odot v\oplus\mu\odot w}{\lambda,\mu\in\RR,\, v,w\in V} \, .
\]
If $V$ is finite then $\tconv V$ is a \emph{tropical polytope}, which can also be called a
``min-plus cone''.  There are several natural ways to represent a tropical polytope, for
instance, as the tropical convex hull of points (as in the definition), as the union of
ordinary polytopes \cite{DevelinSturmfels04}, or as the intersection of tropical
halfspaces \cite{Joswig05}.  We will discuss several ``tropical convex hull algorithms'',
that is, algorithms which translate one representation into the other.  This turns out to
be related to algorithms in ordinary convexity as well as to algorithms known from
combinatorial optimization.  Here we focus on the key geometric aspects.

Combinatorics enters the stage through the observation
\cite[Theorem~1]{DevelinSturmfels04} that configurations of $n$ points in $\TA^{d-1}$ or,
equivalently, tropical polytopes with a fixed set of generators, are dual to regular
subdivisions of the product of simplices $\Delta_{n-1}\times\Delta_{d-1}$.  Products of
simplices in turn occur as the vertex figures of hypersimplices, and these hypersimplices
are known to serve as adequate combinatorial models for the Grassmannians.  In fact, it
turns out that the tropical Grassmannians can be approximated in terms of decompositions
of hypersimplices into matroid polytopes
\cite{Kapranov93,SpeyerSturmfels04,Speyer06,HerrmannJensenJoswigSturmfels08}.  We will
show how a configuration of $n$ points in $\TA^{d-1}$ can be lifted to a matroid
decomposition of the hypersimplex $\Delta(d,n+d)$.

The structure of this paper is as follows.  The first section briefly collects some
information about the tropical determinant and tropical hyperplanes before the next one
introduces the basic combinatorial concepts of tropical convexity.  This section also
explains how ordinary convex hull algorithms can be used to compute with tropical
polytopes.  Then we discuss various versions of the tropical convex hull problem.  In a
section on matroid subdivisions we explore the role of tropical polytopes for the tropical
Grassmannians and finite metric spaces.  Finally, we show how the new version 2.9.4 of the
software system \polymake can be used for computing with tropical polytopes.

\section{Tropical Determinants and Tropical Hyperplanes}

Evaluating ordinary determinants is the key primitive operation of many algorithms in
ordinary convexity.  Its tropical counterpart is equally fundamental.  The \emph{tropical
  determinant} of a matrix $M=(m_{ij})\in\RR^{d\times d}$, also called the ``min-plus
permanent'', is defined by the tropicalized Leibniz formula
\begin{equation}\label{eq:tdet}
  \tdet M \ := \ \bigoplus_{\sigma\in\Sym_d} m_{1,\sigma(1)} \odot m_{2,\sigma(2)} \odot
  \dots \odot m_{d,\sigma(d)} \, ,
\end{equation}
where $\Sym_d$ is the symmetric group acting on the set $[d]:=\{1,2,\dots,d\}$.
Evaluating $\tdet$ is the same as solving the \emph{linear assignment problem}, or
``weighted bipartite matching problem'', from combinatorial optimization for the weight
matrix $M$.  This can be done in $O(d^3)$ time \cite[Corollary 17.4b]{Schrijver03}.

By definition $\det M$ \emph{vanishes} if the minimum in the defining
Equation~\eqref{eq:tdet} is attained at least twice. In this case $M$ is \emph{tropically
  singular}, and it is \emph{tropically regular} otherwise.  Checking if the tropical
determinant vanishes can be translated into solving $d+1$ assignment problems as follows:
First we evaluate $\tdet M$ be solving one assignment problem.  This way we find some
optimal permutation $\sigma$ with $\tdet M=m_{1,\sigma(1)} \odot m_{2,\sigma(2)} \odot
\dots \odot m_{d,\sigma(d)}$.  The permutation $\sigma$ is called a \emph{realizer} of
$\tdet M$.  We have to check if there are other realizers or not.  To this end define $d$
matrices $M_1,M_2,\dots,M_d$ all of which differ from $M$ in only one coefficient, namely
the coefficient in the $i$-th row and the $\sigma(i)$-th column of $M_i$ is increased to
$m_{i,\sigma(i)}+1$.  We subsequently compute $\tdet M_1$, $\tdet M_2$, $\ldots$ up to
$\tdet M_d$.  Clearly $\tdet M\le \tdet M_i$ for all $i$.  Now $M$ is tropically singular
if there is another permutation $\tau$ for which the minimum is also attained.  As
$\sigma$ and $\tau$ must differ in at least one place it follows that such a $\tau$ exists
if and only if $\tdet M=\tdet M_i$ for some $i$.  We conclude that $M$ is tropically
regular if and only if $\tdet M<\tdet M_i$ for all~$i$.  Hence deciding if $M$ is
tropically singular or not requires $O(d^4)$ time.

\begin{remark}
  With the tropical determinant we can express that a square matrix in the tropical world
  should be considered as having full rank or not.  Defining the rank of a general matrix
  in the tropical setting is much more subtle \cite{DevelinSantosSturmfels05}.
\end{remark}

The tropical evaluation of a linear form $a\in\RR^d$ at a vector $x\in\TA^{d-1}$ reads
$\langle a,x\rangle_\trop:=a_1\odot x_1\oplus a_2\odot x_2\oplus \dots \oplus a_d\odot
x_d$. Again this expression \emph{vanishes} if the minimum is attained at least twice.
A vector $a\in\RR^d$ defines a \emph{tropical hyperplane}
\[
H(a) \ := \ \SetOf{x\in\TA^d}{\langle a,x\rangle_\trop\text{ vanishes}} \, ,
\]
and the point $-a\in\TA^{d-1}$ is called the \emph{apex} of $H(a)$.  Any two tropical
hyperplanes just differ by a translation.  We have the following tropical analog to the
situation in ordinary linear algebra; see also \cite{DevelinSantosSturmfels05,Izhakian06}.

\begin{proposition}[{\cite[Lemma 5.1]{RichtergebertSturmfelsTheobald05}}]\label{prop:singular}
  The matrix $M$ is tropically singular if and only if its rows (or, equivalently, its
  columns) considered as points in $\TA^{d-1}$ are contained in a tropical hyperplane.
\end{proposition}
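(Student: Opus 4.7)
The plan is to fold both sides of the equivalence onto a single matrix by absorbing $a$ into the columns, and then tackle the two directions with bipartite matching and LP duality, respectively.

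For the reduction, given $a \in \RR^d$ let $N(a)$ be the matrix with entries $n_{ij} := m_{ij} + a_j$.  Because a column shift adds the same constant $\sum_j a_j$ to every term in the Leibniz expansion~\eqref{eq:tdet}, one has $\tdet N(a) = \tdet M + \sum_j a_j$, with the same realizing permutations; in particular $M$ is tropically singular iff $N(a)$ is.  Moreover the $i$-th row of $M$, viewed as a point in $\TA^{d-1}$, lies on $H(a)$ iff $\min_j n_{ij}$ is attained at two or more columns.  The proposition thus reduces to the assertion: \emph{$M$ is tropically singular iff there exists $a \in \RR^d$ such that every row of $N(a)$ has its minimum attained at least twice.}  The equivalent statement for columns then follows by transposing and applying $\tdet M = \tdet M^\top$.

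For $(\Leftarrow)$, I would subtract each row minimum so that $N := N(a)$ is non-negative with tight sets $T_i = \{j : n_{ij} = 0\}$ of size $\ge 2$, and form the bipartite graph $G$ on $[d] \sqcup [d]$ with edges $\{(i,j) : j \in T_i\}$, in which every row vertex has degree $\ge 2$.  If $G$ admits a perfect matching $\sigma$, then $\tdet N = 0$ and a standard alternating-cycle argument (leave any matched row vertex along a tight edge distinct from its matching edge, return along the matching edge incident to the reached column, and iterate until a vertex repeats) produces a second perfect matching of $G$, hence a second Leibniz realizer.  If $G$ has no perfect matching, Hall's deficiency isolates a block $I \subseteq [d]$ with $|\bigcup_{i \in I} T_i| < |I|$; any optimal permutation must send a fixed number of rows of $I$ to columns outside $\bigcup_{i \in I} T_i$, and an alternating cycle applied inside the block supplies the second optimum.

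For $(\Rightarrow)$, I would apply LP duality to the assignment problem underlying~\eqref{eq:tdet}.  An optimal dual $(u,v) \in \RR^d \times \RR^d$ satisfies $u_i + v_j \le m_{ij}$ with equality exactly on the support $S_i := \{j : u_i + v_j = m_{ij}\}$, which contains $\sigma(i)$ for every optimal $\sigma$.  Setting $a := -v$ makes $\min_j(a_j + m_{ij}) = u_i$, attained precisely on $S_i$.  The existence of two distinct optima $\sigma \ne \tau$ already forces $|S_i| \ge 2$ whenever $\sigma(i) \ne \tau(i)$; if this holds for every $i$ the argument is complete.  Otherwise some row $i_0$ has a unique support $S_{i_0} = \{j_0\}$, which forces $\sigma(i_0) = \tau(i_0) = j_0$ in every optimum, so the $(d{-}1)\times(d{-}1)$ submatrix obtained by deleting row $i_0$ and column $j_0$ remains tropically singular, and I conclude by induction on $d$, extending the inductive $a' \in \RR^{d-1}$ to $a \in \RR^d$ by choosing $a_{j_0}$ so that $a_{j_0} + m_{i_0, j_0}$ ties the row-$i_0$ minimum obtained from the remaining columns.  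The main obstacle I expect is precisely this extension step: the chosen $a_{j_0}$ must simultaneously create the required tie in row $i_0$ and yet not strictly undercut the already doubly-attained minimum in any other row.  Verifying that the strict inequalities $u_{i_0} + v_j < m_{i_0, j}$ for $j \ne j_0$ (which encode the uniqueness of $j_0$ in $S_{i_0}$) leave enough slack to satisfy both constraints will be the technical heart of the argument.
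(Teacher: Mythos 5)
The paper does not give a proof of this proposition; it is quoted from Richter-Gebert, Sturmfels, and Theobald, so there is nothing internal to compare against. Judged on its own terms, your argument has the right skeleton --- the reduction absorbing $a$ into the columns so that the claim becomes ``$M$ is tropically singular iff some column shift makes every row minimum a tie'' is clean and correct --- but the forward direction has a genuine gap, precisely at the spot you flag.

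After deleting row $i_0$ and column $j_0$ and invoking induction on the minor, the inductive hypothesis hands you \emph{some} $a'$ for which every row of the minor has a twice-attained minimum, and nothing more. To extend, you must choose $a_{j_0}$ so that (in the bad sub-case) $a_{j_0}+m_{i_0,j_0}=\mu_{i_0}$ while also $a_{j_0}+m_{i,j_0}\ge\mu_i$ for every $i\ne i_0$, where $\mu_i=\min_{j\ne j_0}(a'_j+m_{ij})$. These constraints need $\mu_{i_0}-m_{i_0,j_0}\ge\mu_i-m_{i,j_0}$ for all $i\ne i_0$, and nothing in the inductive hypothesis controls $\mu_{i_0}$ relative to the $\mu_i$: the witness $a'$ is arbitrary. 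The slack $m_{i_0,j}-u_{i_0}-v_j>0$ you invoke lives in the dual $(u,v)$ of the \emph{original} matrix, which was discarded the moment you recursed, so it imposes no constraint on $a'$. As stated the extension can fail, and the induction does not close.

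One way to repair it that stays inside your LP viewpoint, and also streamlines your backward direction, is to work with the exchange digraph $D$ on the column set $[d]$: put an arc $\sigma(i)\to j$ of weight $w(\sigma(i),j):=m_{ij}-m_{i,\sigma(i)}$ for every $i$ and every $j\ne\sigma(i)$. Optimality of $\sigma$ is equivalent to $D$ having no negative cycle, and a second realizer is exactly a zero-weight cycle. For ($\Rightarrow$): the two realizers $\sigma\ne\tau$ supply a zero cycle $C$; fix $c_0$ on $C$ and set $v_c:=-\operatorname{dist}_D(c,c_0)$. This is a feasible potential, so with $a:=-v$ the minimum of row $i$ is attained at $\sigma(i)$; every $c\ne c_0$ has a tight outgoing arc (the first arc of a shortest path to $c_0$), and $c_0$ does too, because the zero cycle forces $w(c_0,j)+\operatorname{dist}_D(j,c_0)=0=\operatorname{dist}_D(c_0,c_0)$ on its first arc. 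A tight arc $\sigma(i)\to j$ is exactly a second tied column in row $i$. For ($\Leftarrow$) the same graph lets you avoid the case split on whether the tight bipartite graph has a perfect matching (your Hall-deficiency branch is not clearly correct as written): since every row has two tied minima, every column vertex has an outgoing arc of weight $\le 0$, so the nonpositive subgraph contains a cycle, whose weight must be $0$ by optimality, and swapping $\sigma$ along it gives the second realizer.
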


The ordering of the real numbers allows to refine the linear algebra in $\RR^{d-1}$ to the
theory of ordinary convexity.  For algorithmic purposes sidedness queries of a point
versus an affine hyperplane (spanned by $d-1$ other points) are crucial, and this is
computed by evaluating the sign of the ordinary determinant of a $d\times d$-matrix with
homogeneous coordinate vectors as its rows.  This does not directly translate to the
tropical situation since the sign of the tropical determinant does not have a geometric
meaning.  More severely, the complement of a tropical hyperplane in $\TA^{d-1}$ has
exactly $d$ connected components, its \emph{open sectors}.  The remedy is the following.
Let us suppose that all the realizers of the tropical determinant of $M$ share the same
parity.  In~\cite{Joswig05} this parity is called the \emph{tropical sign} of $M$, denoted
as $\tsgn M$.  If, however, $M$ has realizers of both signs then $\tsgn M$ is set to zero.
The tropical sign is essentially the same as the sign of the determinant in the
``symmetrized min-plus algebra'' \cite{BacelliCohenOlsderQuadrat92}.  The computation of
the tropical sign of a matrix is equivalent to deciding if a directed graph has a directed
cycle of even length \cite[\S3]{Butkovic95}.  This latter problem is solvable in $O(d^3)$
time \cite{RobertsonSeymourThomas99,McCuaig01}.

For fixed $v_2,v_3,\dots,v_d\in\TA^{d-1}$ the tropical sign gives rise to a map
\[
\tau \,:\, \TA^{d-1}\to\{0,\pm 1\} \,:\, x\mapsto\tsgn(x,v_2,v_3,\dots,v_d) \, ,
\]
where $(x,v_2,v_3,\dots,v_d)$ is the $d\times d$-matrix formed of the given vectors as its
rows. The following is a signed version of Proposition~\ref{prop:singular}.

\begin{theorem}[{\cite[Theorem 4.7 and Corollary 4.8]{Joswig05}}]
  Either $\tau$ is constantly zero, or for $\epsilon=\pm1$ the preimage
  $\tau^{-1}(\{0,\epsilon\})$ is a closed tropical halfspace. Each closed tropical halfspace
  arises in this way.
\end{theorem}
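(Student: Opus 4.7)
The plan is to expand $\tdet(x,v_2,\ldots,v_d)$ along the first row and then analyse $\tau$ stratum by stratum. Grouping the permutations in~\eqref{eq:tdet} by the value $\sigma(1)=i$ yields the tropical Laplace expansion
\[
  \tdet(x,v_2,\ldots,v_d) \;=\; \bigoplus_{i=1}^{d} x_i\odot c_i,\qquad c_i:=\tdet C_i,
\]
where $C_i$ denotes the $(d-1)\times(d-1)$ minor obtained from the rows $v_2,\ldots,v_d$ by deleting the $i$-th column. Consequently $\tdet(x,v_2,\ldots,v_d)$ vanishes in $x$ precisely on the tropical hyperplane $H(c)$ with apex $-c$, whose open sectors are $S_i=\{x\in\TA^{d-1} : x_i+c_i<x_j+c_j\text{ for all } j\ne i\}$.

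The first key step will be to evaluate $\tau$ on each open sector $S_i$: there the minimum is attained uniquely at $i$, so every realizer $\sigma$ of $\tdet M$ satisfies $\sigma(1)=i$, and the restriction $\sigma\mapsto\sigma|_{\{2,\ldots,d\}}$ (after relabeling $[d]\setminus\{i\}$ in increasing order) is a bijection onto the realizers of $\tdet C_i$ that multiplies parities by $(-1)^{i+1}$, exactly as in the classical Laplace cofactor identity. Hence on $S_i$ the map $\tau$ is the constant
\[
  \epsilon_i \;:=\; (-1)^{i+1}\cdot\tsgn C_i\;\in\;\{0,\pm1\}.
\]
The second key step handles the hyperplane itself: at $x$ with active set $A(x):=\{i:x_i+c_i=\min_k(x_k+c_k)\}$ of size $\ge 2$, the realizers of $\tdet M$ split into disjoint blocks indexed by $i\in A(x)$, each block carrying the single parity $\epsilon_i$. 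Therefore $\tau(x)=+1$ iff $\epsilon_i=+1$ for every $i\in A(x)$; $\tau(x)=-1$ iff $\epsilon_i=-1$ for every $i\in A(x)$; otherwise $\tau(x)=0$.

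Assembling these two cases gives $\tau^{-1}(\{0,+1\})=\bigcup_{i:\,\epsilon_i\ne -1}\overline{S}_i$, a union of closed sectors of $H(c)$ and hence a closed tropical halfspace, with the mirror statement for $\epsilon=-1$. Moreover $\tau$ is identically zero precisely when every $\epsilon_i=0$, i.e.\ when no minor $C_i$ has an unambiguous tropical sign. For the converse assertion that every closed tropical halfspace so arises, given an apex $-a$ and a subset $T\subset[d]$ specifying which closed sectors to include, I would construct $v_2,\ldots,v_d$ explicitly by choosing an almost-permutation-matrix pattern in which each $C_i$ has a unique (hence unambiguously signed) realizer of the prescribed parity---$(-1)^{i+1}$ when $i\in T$ and $(-1)^{i}$ otherwise---and then tuning $d-1$ off-pattern entries to match the ratios $a_i-a_j$. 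The hard part will be exactly this converse construction: while each sign condition is an open constraint that is individually easy to satisfy, the verification that a single matrix can simultaneously achieve the prescribed apex together with all $d$ required minor signs requires a careful, parameter-counting argument.
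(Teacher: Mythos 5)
The survey paper does not prove this theorem; it cites Theorem~4.7 and Corollary~4.8 of~\cite{Joswig05} without reproducing the argument, so there is no in-paper proof to compare against. Taking your proposal on its own merits, the Laplace-expansion framework and the stratum-by-stratum evaluation of $\tau$ via the cofactor signs $\epsilon_i = (-1)^{i+1}\tsgn C_i$ are a sound starting point, and the formula $\tau^{-1}(\{0,\epsilon\})=\bigcup_{i:\epsilon_i\neq-\epsilon}\bar S_i$ is correct. But there is a genuine gap between that formula and the conclusion ``hence a closed tropical halfspace.'' By the paper's definition a closed tropical halfspace is the union of \emph{at least one and at most $d-1$} closed sectors. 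Your formula produces all of $\TA^{d-1}$ if $\epsilon_i\neq-\epsilon$ for every $i$, and the empty set if $\epsilon_i=-\epsilon$ for every $i$; it also produces a non-halfspace in the mixed case where, say, $\epsilon_i\in\{0,+1\}$ for all $i$ and not all $\epsilon_i$ are zero, since then $\tau^{-1}(\{0,+1\})=\TA^{d-1}$. The theorem's dichotomy therefore secretly relies on a structural constraint on the cofactor sign vector: either $\epsilon_i=0$ for all $i$, or both $+1$ and $-1$ occur among the $\epsilon_i$. This is a tropical analogue of the Grassmann--Pl\"ucker relations for the maximal minors of a $(d-1)\times d$ matrix, and it is not automatic. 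For $d=3$ one can check it by hand (setting $a=v_{2,1}-v_{3,1}$, $b=v_{2,2}-v_{3,2}$, $c=v_{2,3}-v_{3,3}$, one finds $\epsilon_1=\mathrm{sgn}(c-b)$, $\epsilon_2=\mathrm{sgn}(a-c)$, $\epsilon_3=\mathrm{sgn}(b-a)$, which visibly cannot all share one nonzero sign), but you assert nothing of the sort in general. Without this lemma your argument does not rule out the non-halfspace outcomes, and this is the crux of the theorem, not a technicality.

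Two further remarks. First, the sentence ``$\tdet(x,v_2,\ldots,v_d)$ vanishes in $x$ precisely on the tropical hyperplane $H(c)$'' is false as stated: the determinant also vanishes at points of the \emph{open} sector $S_i$ whenever $\tsgn C_i=0$. Your later case analysis does not actually use this claim, so it is a local slip rather than a fatal one, but it should be corrected. Second, for the converse (every closed tropical halfspace arises as such a preimage) you offer only an outline and yourself flag it as the hard part; as written it is a plan, not a proof. The natural way to close both gaps is to identify the sign vector $(\epsilon_1,\ldots,\epsilon_d)$ with a cocircuit of a tropical oriented matroid on the rows $v_2,\ldots,v_d$ and invoke (or prove) the relevant axiom; this is essentially what the original reference does.
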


Here a \emph{closed tropical halfspace} in $\TA^{d-1}$ is the union of at least one and at
most $d-1$ closed sectors of a fixed tropical hyperplane.  A \emph{closed sector} is the
topological closure of an open sector. For $x\in\TA^{d-1}$ let
\[
x^0 \ := \ x-\min(x_1,x_2,\dots,x_d)\cdot\vones_d
\]
be the unique representative in the coset $x+\RR\vones_d$ with non-negative coefficients
and at least one zero.  Then the open sectors of the tropical hyperplane $H(0)$ are the
sets $S_1,S_2,\dots,S_d$ where
\[
S_i \ := \ \SetOf{x\in\TA^{d-1}}{x^0_i=0 \text{ and } x^0_j>0 \text{ for $j\ne i$}} \, .
\]
For the closed sectors we thus have $\bar S_i=\smallSetOf{x\in\TA^{d-1}}{x^0_i=0}$.
Clearly, $-a+\bar S_1,-a+\bar S_2,\dots,-a+\bar S_d$ are the closed sector of the tropical
hyperplane $H(a)$ and similarly for the closed sectors.  It was mentioned in the
introduction that $\TA^{d-1}$ is homeomorphic to~$\RR$.  One specific homeomorphism is
given by the map
\[
(x_1,x_2,\dots,x_d)+\RR\vones_d \ \mapsto \ (x_2-x_1,x_3-x_1,\dots,x_d-x_1) \, .
\]
In this paper we will often identify $\TA^{d-1}$ with $\RR^{d-1}$ via this particular map.
For instance, its inverse translates our pictures below to $\TA^2$.  Moreover, it allows
us to discuss matters of ordinary convexity in $\TA^{d-1}$; see \cite{JoswigKulas08}.
Notice that the closed and open sectors of any halfspace are both tropically and
ordinarily convex.  For references to ordinary convexity and, particular, to the theory of
convex polytopes see \cite{Gruenbaum03,Ziegler95}.  In order to avoid confusion which type
of convexity is relevant in a particular statement we will explicity say either
``tropical'' or ``ordinary'' throughout.

\section{Vertices, Pseudo-vertices, and Types}

The combinatorics sneaks into the picture through the choice of a system of generators of
a tropical polytope.  Let $V=(v_1,v_2,\dots,v_n)$ be a finite ordered sequence of points
in~$\TA^{d-1}$.  This induces a cell decomposition of $\TA^{d-1}$ into \emph{types} as
follows
\[
\type_V(x) \ := \ (T_1,T_2,\dots,T_d) \, ,
\]
where $T_k=\smallSetOf{i\in[n]}{v_i\in x+\bar S_k}$.  The individual sets $T_k$ are called
\emph{type entries}.  A type vector $(T_1,T_2,\dots,T_d)$ with respect to $V$ satisfies
the condition $T_1\cup T_2\cup\dots\cup T_d=[n]$ (but the converse does not hold).
Sometimes it will be convenient to identify the point sequence $V$ in $\RR^d$ with the
matrix $(v_{ij})$ whose $i$-th row is the point $v_i$.

\begin{proposition}[{\cite[Lemma~10]{DevelinSturmfels04}}]
  \label{prop:DS:Lemma10}
  The points of a fixed type $T$ with respect to the generating system $V$ form an
  ordinary polyhedron $X_T$ which is tropically convex.  More precisely,
  \[
  X_T \ = \ \bigSetOf{x\in\TA^{d-1}}{x_k-x_j\le v_{ik}-v_{ij} \text{ for all } j,k\in[d]
    \text{ and } i\in T_j} \, .
  \]
\end{proposition}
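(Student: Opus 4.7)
The argument is essentially an unwinding of definitions. The key observation is a translation: for any fixed $i\in[n]$ and $j\in[d]$, the condition $i\in T_j$ is equivalent to a block of ordinary linear inequalities on $x$. Indeed, $i\in T_j$ means $v_i\in x+\bar S_j$, which by the description of $\bar S_j$ unpacks to the statement that $v_{ij}-x_j$ realizes the minimum of $v_{i\ell}-x_\ell$ over $\ell\in[d]$. Writing this minimality out as $v_{ij}-x_j\le v_{ik}-x_k$ for every $k\in[d]$ and rearranging yields exactly $x_k-x_j\le v_{ik}-v_{ij}$. This single equivalence is the heart of the matter; everything else is bookkeeping.

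The forward inclusion $X_T\subseteq\bigSetOf{x}{x_k-x_j\le v_{ik}-v_{ij}\text{ for all }i\in T_j,\ j,k\in[d]}$ is then immediate: just read off the inequalities above for every pair $(i,j)$ with $i\in T_j$. For the converse, suppose $x$ satisfies all of these inequalities. Every type vector obeys $T_1\cup\cdots\cup T_d=[n]$, so for each index $i$ there is some $j$ with $i\in T_j$, and the equivalence then forces the minimum of $v_{i\ell}-x_\ell$ to be attained at $\ell=j$, i.e.\ $v_i\in x+\bar S_j$. Tracking which indices end up in which sectors for the point $x$ and comparing with $T$ yields $\type_V(x)=T$. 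The only subtle point, and the one I expect to be the main obstacle, is to verify that the closed-sector memberships of $x$ coincide exactly with those prescribed by $T$, with no spurious ones arising from accidental ties; this uses that the defining inequalities are all of the inequalities active for $T$ (none can be strict for some of the generators that $T$ assigns to the same sector), which is built into the closed-sector convention used to define $\type_V$.

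Finally, the right-hand side is by construction an intersection of finitely many closed affine halfspaces in $\TA^{d-1}$, identified with $\RR^{d-1}$ as in the previous section, and therefore an ordinary convex polyhedron. Tropical convexity of $X_T$ reduces to showing that each individual slab $\bigSetOf{x}{x_k-x_j\le v_{ik}-v_{ij}}$ is tropically convex, since intersections of tropically convex sets remain tropically convex. For that, given $x,y$ in such a slab and $\lambda,\mu\in\RR$, a short case distinction on which of $x_j+\lambda$ and $y_j+\mu$ realizes the coordinatewise minimum in $\lambda\odot x\oplus\mu\odot y$ shows that the combined vector still satisfies the constraint, so no further work is needed.
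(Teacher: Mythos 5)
Your forward direction and your argument for tropical convexity are both fine and are essentially the standard proof. The gap is in the converse.

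From the inequalities $x_k-x_j\le v_{ik}-v_{ij}$ for all $i\in T_j$, all you can deduce is that $v_i\in x+\bar S_j$ whenever $i\in T_j$, i.e.\ that $T\subseteq\type_V(x)$ componentwise. You cannot deduce $\type_V(x)=T$, and the attempted repair via ``the closed-sector convention'' does not help: that convention records \emph{every} closed sector containing $v_i-x$, so nothing forbids $i$ from landing in additional sectors not listed in~$T$. A concrete failure: take $d=2$, $n=1$, $v_1=(0,0)$, $T=(\{1\},\emptyset)$. The single constraint is $x_2-x_1\le 0$, satisfied by $x=(0,0)$, yet $\type_V(x)=(\{1\},\{1\})\neq T$.

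The resolution is not a further lemma but a correction to how $X_T$ is read. In Develin--Sturmfels, $X_T$ is \emph{defined} as $\smallSetOf{x\in\TA^{d-1}}{T\subseteq\type_V(x)}$ (componentwise inclusion), not as the set of points whose type is exactly $T$. The latter would typically be relatively open and could not serve as a cell in the polyhedral decomposition of Theorem~\ref{thm:type-decomposition}; the survey's phrase ``points of a fixed type $T$'' is informal shorthand for the closed cell. Once $X_T$ is read this way, your proof closes without friction: the forward direction gives $X_T$ inside the inequality region, and the inclusion $T\subseteq\type_V(x)$ you actually established is precisely the other containment. There is no tie-breaking issue to resolve, because none was ever required.
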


A set in $\TA^{d-1}$ is \emph{tropically convex} if it coincides with its own tropical
convex hull.  The ordinary polyhedron $X_T$ is bounded if and only if all entries in the
type $T$ are non-empty.  The bounded ordinary polyhedra $X_T$ are precisely the
\emph{polytropes} studied in \cite{JoswigKulas08}.

\begin{theorem}[{\cite[Theorem~15]{DevelinSturmfels04}}]\label{thm:type-decomposition}
  The collection of ordinary polyhedra $X_T$, where $T$ ranges over all types, is a
  polyhedral decomposition of $\TA^{d-1}$.  The tropical polytope $\tconv V$ is precisely
  the union of the bounded types.
\end{theorem}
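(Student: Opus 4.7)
My plan is to prove the two assertions separately: the polyhedral decomposition, and the identification of $\tconv V$ as the union of bounded cells.

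For the decomposition, each $X_T$ is a closed polyhedron by Proposition~\ref{prop:DS:Lemma10}, and only finitely many types occur. Every $x \in \TA^{d-1}$ has a well-defined type $\type_V(x)$ and lies in $X_{\type_V(x)}$ by construction, so the cells cover $\TA^{d-1}$. The real content is the face property: for two types $T, T'$ I set $(T \vee T')_j := T_j \cup T'_j$, and then the inequality description of Proposition~\ref{prop:DS:Lemma10} immediately yields $X_T \cap X_{T'} = X_{T \vee T'}$, since the defining inequalities of the right-hand side are exactly the union of those of the left. To show $X_{T \vee T'}$ is a face of $X_T$, I will exploit the covering property $\bigcup_j T_j = [n]$ of any type: for each $i \in T'_j \setminus T_j$ there exists $j'$ with $i \in T_{j'}$, so the inequality $v_{ij'} - x_{j'} \le v_{ij} - x_j$ is already present in $X_T$, while the newly added constraint for $X_{T \vee T'}$ with $k = j'$ is its reverse. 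Combining the two forces an equality, and the remaining new constraints (those with $k \ne j'$) then follow from this equality together with inequalities of $X_T$. Thus $X_{T \vee T'}$ is obtained from $X_T$ by tightening a set of defining inequalities to equalities, which is exactly how a face of a polyhedron is cut out.

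For the second assertion, I will characterize tropical convex combinations in terms of types. Fix $x \in \TA^{d-1}$ and set $\lambda_i := -\min_k(v_{ik} - x_k)$, the smallest scalars with $\lambda_i \odot v_i \ge x$ coordinate-wise. A short direct calculation shows that $\bigl(\bigoplus_i \lambda_i \odot v_i\bigr)_j = x_j$ exactly when some $i$ attains $v_{ij} - x_j = \min_k(v_{ik} - x_k)$, i.e.\ exactly when the $j$-th entry of $\type_V(x)$ is nonempty. Any other representation $x = \bigoplus_i \mu_i \odot v_i$ forces $\mu_i \ge \lambda_i$, with equality required in each coordinate where the minimum is attained, so no other choice of coefficients helps. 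Consequently $x \in \tconv V$ iff every entry of $\type_V(x)$ is nonempty, which by the remark just preceding the theorem is equivalent to $X_{\type_V(x)}$ being bounded. Since $T \le \type_V(x)$ whenever $x \in X_T$, this yields $\tconv V = \bigcup \{X_T : X_T \text{ bounded}\}$.

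The main obstacle is the face property in the first part. Its crucial input is the covering property of types: without it, a new constraint of $X_{T \vee T'}$ might not correspond to tightening any existing inequality of $X_T$, and the face structure would fail. Everything else---the covering of $\TA^{d-1}$, the set-theoretic identity $X_T \cap X_{T'} = X_{T \vee T'}$, and the computation with the minimal coefficients $\lambda_i$---is either immediate or a routine manipulation.
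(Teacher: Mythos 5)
Your proof is correct, and since the paper cites this as Theorem~15 of Develin and Sturmfels without reproducing a proof, the relevant comparison is with their argument, which your proof follows in essentially the same way. In particular, the set-theoretic identity $X_T\cap X_{T'}=X_{T\vee T'}$ combined with the covering property $\bigcup_j T_j=[n]$ to exhibit the intersection as a face is their route to the decomposition, and the membership criterion ``$x\in\tconv V$ iff every entry of $\type_V(x)$ is nonempty'' via the minimal scalars $\lambda_i=-\min_k(v_{ik}-x_k)$ is exactly the computation that underlies their second assertion.
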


Let $\cU$ be the $(n+d-1)$-dimensional real vector space
$\RR^{n+d}/\RR(\vones_n,-\vones_d)$; its elements are the equivalence classes of
pairs $(y,z)+\RR(\vones_n,-\vones_d)$ where $y\in\RR^n$ and $z\in\RR^d$.  For an $n\times
d$-matrix $V=(v_{ij})$ the polyhedron
\begin{equation}\label{eq:P_V}
\cP_V \ := \ \bigSetOf{(y,z)\in\cU}{y_i+z_j\le v_{ij} \text{ for all } i\in [n] \text{ and }
  j\in [d]}
\end{equation}
is unbounded.  We call the polyhedron $\cP_V$ the \emph{envelope} of $\tconv V$ with
respect to $V$.  By \cite[Lemma~22]{DevelinSturmfels04} the map
$\cU\rightarrow\TA^{d-1}:(y,z)\mapsto z$ sends the bounded faces of $\cP_V$ to the bounded
types induced by the generating set~$V$.  In particular, $\tconv V$ is the image of the
union of all bounded faces of its envelope.  Similarly the unbounded types correspond to
the unbounded faces of $\cP_V$.  Of particular importance to us are the vertices of
$\cP_V$ which we call the \emph{pseudo-vertices} of $\tconv V$ with respect to the
generating system~$V$.  What we just discussed is explicitly stated as
Algorithm~\ref{algo:pseudo-vertices:ordinary} below.

\begin{algorithm}
  \dontprintsemicolon

  \Input{$V\subset\TA^{d-1}$ finite}
  \Output{pseudo-vertices of $\tconv V$}

  compute the vertices $W$ of the envelope $\cP_V$ via an arbitrary (dual) ordinary convex
  hull algorithm \;
  
  \Return{image of $W$ under projection $(y,z)\mapsto z$} \;

  \caption{Computing the pseudo-vertices via ordinary convex hull.}
  \label{algo:pseudo-vertices:ordinary}
\end{algorithm}

The ordinary convex hull problem asks to compute the facets of the ordinary convex hull of
a given finite set of points in Euclidean space.  Via cone polarity this is equivalent to
the dual problem of enumerating the ordinary vertices from an ordinary halfspace
description.  The latter also extends without changes to ordinary unbounded polyhedra
which do not contain any affine subspace, and this is why a dual convex hull algorithm can
be applied to $\cP_V$ as given in \eqref{eq:P_V} in
Algorithm~\ref{algo:pseudo-vertices:ordinary}.  For an overview of ordinary convex hull
algorithms both from the theoretical and the practical point of view see
\cite{AvisBremnerSeidel97,Beneath-Beyond}.  The complexity of the ordinary convex hull
problem in variable dimension is not entirely settled.

\begin{remark}
  It is obvious to compute the types of the pseudo-vertices with respect to a given set of
  generators by checking the definition.  More interesting is that it also works the other
  way around.  Once the type of a pseudo-vertex is given one can determine its coordinates
  from the coordinates of the generators.  To see this observe that if $w$ is a
  pseudo-vertex with respect to $V$ then after fixing an arbitrary coordinate, say $w_1$,
  the inequalities in Proposition~\ref{prop:DS:Lemma10} degenerate to a system of
  equations which determine $w_2-w_1,w_3-w_1,\dots,w_d-w_1$.
\end{remark}

For $\tconv V$, just considered as a subset of $\TA^{d-1}$, there is a unique minimal
system of generators with respect to inclusion; see \cite[Theorem
15.6]{Cunninghame-Green79} and \cite[Proposition 21]{DevelinSturmfels04}.  These are the
\emph{tropical vertices} of $\tconv V$.  Among the generators a vertex $w$ is recognized
by the property that at least one of its type entries contains only the index of $w$
itself; see Algorithm~\ref{algo:vertices}.  This process takes $O(nd^2)$ time.

\begin{algorithm}
  \dontprintsemicolon

  \Input{$V\subset\TA^{d-1}$ finite}
  \Output{set of tropical vertices of $\tconv V$}

  $W\leftarrow\emptyset$ \;
  \ForEach{$w\in V$}{
    $(T_1,T_2,\dots,T_d)\leftarrow\type_{V\setminus\{w\}}(w)$\;
    \If{$T_k=\emptyset$ for some $k$}{
      add $w$ to $W$ \;
    }
  }
  \Return{W} \;

  \caption{Computing the tropical vertices.}
  \label{algo:vertices}
\end{algorithm}

\begin{figure}[htb]
  \includegraphics[height=.45\textwidth]{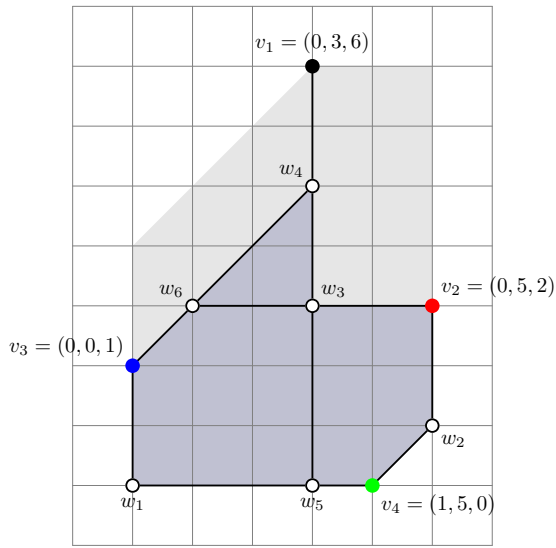}

  \caption{Tropical convex hull of four points in $\TA^2$.}
  \label{fig:BlockYu-Fig1}
\end{figure}

\begin{example}\label{ex:BlockYu-Fig1:1}
  Consider the point sequence $V$ with
  \[
  v_1 = (0,3,6), \; v_2 =(0,5,2), \; v_3=(0,0,1), \; v_4=(1,5,0)
  \]
  in $\TA^2$.  This is the same example as the one considered in
  \cite[Figure~1]{BlockYu06}.  It is shown in Figure~\ref{fig:BlockYu-Fig1}.  The tropical
  polygon $\tconv V$ has precisely ten pseudo-vertices. They have their coordinates and
  types as listed in Table~\ref{tab:BlockYu-Fig1:types}.  The first four pseudo-vertices
  are the given generators, and these are also the tropical vertices.  The meaning of the
  last column will be explained in Section~\ref{sec:halfspaces} below.  Altogether the
  type decomposition of $\tconv V$ with respect to $V$ has ten vertices, 12 edges, and
  three two-dimensional faces.  This information is collected in the so-called
  \emph{$f$-vector} $(10,12,3)$.
\end{example}

\begin{table}[hbt]
  \caption{Pseudo-vertices of the tropical polygon from Example~\ref{ex:BlockYu-Fig1:1}
    and Figure~\ref{fig:BlockYu-Fig1}.}
  \renewcommand{\arraystretch}{0.9}
  \begin{tabular*}{.75\linewidth}{@{\extracolsep{\fill}}clcc@{}}
    \toprule
    label & \multicolumn{1}{c}{$w$} & $\type_V(w)$  & facet sectors\\
    \midrule
    $v_1$ & $(0,3,6)$ & {1}, {1}, {1234}  & 3\\
    $v_2$ & $(0,5,2)$ & {2}, {123}, {24}  & \\
    $v_3$ & $(0,0,1)$ & {123}, {3}, {34}  & \\
    $v_4$ & $(1,5,0)$ & {24}, {134}, {4}  & \\[1ex]
    $w_1$ & $(1,1,0)$ & {1234}, {3}, {4}  & 1\\
    $w_2$ & $(0,5,0)$ & {2}, {1234}, {4}  & 2\\
    $w_3$ & $(0,3,2)$ & {12}, {13}, {24}  & 23\\
    $w_4$ & $(0,3,4)$ & {1}, {13}, {234}  & 13\\
    $w_5$ & $(1,4,0)$ & {124}, {13}, {4}  & \\
    $w_6$ & $(0,1,2)$ & {12}, {3}, {234}  & \\
    \bottomrule
  \end{tabular*}
  \label{tab:BlockYu-Fig1:types}
\end{table}

Let $\Delta_k:=\conv\{e_1,e_2,\dots,e_{k+1}\}$ be the (ordinary) regular $k$-dimensional
simplex.  Here and in the sequel the standard basis vectors of $\RR^d$ are denoted as
$e_1,e_2,\dots,e_d$.  The product of two simplices $\Delta_{n-1}\times\Delta_{d-1}$ is an
$(n+d-2)$-dimensional ordinary polytope with $n+d$ facets and $nd$ vertices.  We can read
our matrix $V$ as a way of assigning the height $v_{ij}$ to the vertex $(e_i,e_j)$ of
$\Delta_{n-1}\times\Delta_{d-1}$. Projecting back the lower convex hull of the lifted
polytope yields a regular subdivision of $\Delta_{n-1}\times\Delta_{d-1}$; see
\cite{Triangulations} for a comprehensive treatment of the subject.  The following is the
main structural result about tropical polytopes.

\begin{theorem}[{\cite[Theorem~1]{DevelinSturmfels04}}]\label{thm:main}
  The tropical polytope $\tconv V$ with the induced structure as a polytopal complex by
  its bounded types is dual to the regular subdivision of $\Delta_{n-1}\times\Delta_{d-1}$
  induced by $V$.
\end{theorem}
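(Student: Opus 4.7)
The plan is to prove the duality via the envelope $\cP_V$ from \eqref{eq:P_V}, using it as a common bridge between the type decomposition of $\tconv V$ and the regular subdivision of $\Delta_{n-1}\times\Delta_{d-1}$. The strategy has two halves: first identify the faces of $\cP_V$ with the cells of the subdivision via inclusion-reversing duality, and then identify the bounded faces of $\cP_V$ with the bounded types via the projection $(y,z)\mapsto z$ recalled just before Algorithm~\ref{algo:pseudo-vertices:ordinary}. Composing the two yields the claimed duality between the polytopal complex of bounded types and the subdivision complex.

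For the first half, each defining inequality $y_i+z_j\le v_{ij}$ of $\cP_V$ is indexed by the vertex $(e_i,e_j)$ of $\Delta_{n-1}\times\Delta_{d-1}$. For any face $F$ of $\cP_V$, let $S(F)\subseteq[n]\times[d]$ record which inequalities are tight on $F$. A subset $S$ arises this way if and only if $\{(e_i,e_j,v_{ij}):(i,j)\in S\}$ is the vertex set of a lower face of the lifted polytope $\conv\{(e_i,e_j,v_{ij}):i\in[n],j\in[d]\}$, which is exactly the condition for $\conv\{(e_i,e_j):(i,j)\in S\}$ to be a cell of the regular subdivision induced by~$V$. Since $F_1\subseteq F_2$ iff $S(F_1)\supseteq S(F_2)$, while $\sigma_1\subseteq\sigma_2$ iff the corresponding vertex sets satisfy $S(\sigma_1)\subseteq S(\sigma_2)$, the map $F\mapsto\conv\{(e_i,e_j):(i,j)\in S(F)\}$ is the desired inclusion-reversing bijection of face lattices, with $\dim F+\dim\sigma(F)=n+d-2$.

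For the second half, given $z\in\TA^{d-1}$ with $\type_V(z)=(T_1,\dots,T_d)$, I would consider the canonical lift $y_i:=\min_k(v_{ik}-z_k)$. A direct check shows $(y,z)\in\cP_V$ and that the tight pairs at $(y,z)$ are precisely $\{(i,k):i\in T_k\}$. Comparing with Proposition~\ref{prop:DS:Lemma10} shows that this canonical lift identifies the type cell $X_T$ with a face of $\cP_V$; the face is bounded exactly when every $T_k$ is non-empty, matching the characterization stated just after Proposition~\ref{prop:DS:Lemma10}. Composing with the first bijection then sends a bounded type $T$ to the subdivision cell with vertex set $\{(e_i,e_k):i\in T_k\}$, inclusion-reversing and with dimensions summing to $n+d-2$.

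The main obstacle is the careful handling of the bounded-versus-unbounded distinction, since $\tconv V$ is only the union of bounded types (Theorem~\ref{thm:type-decomposition}) whereas the subdivision covers all of $\Delta_{n-1}\times\Delta_{d-1}$. One must verify that under the first bijection the bounded faces of $\cP_V$ correspond exactly to those cells of the subdivision meeting the relative interior of the product of simplices, and that the canonical lift in the second half distinguishes bounded from unbounded type cells correctly. Packaging these compatibilities into a genuine duality of polytopal complexes, rather than a mere set bijection of cells, is where the essential technical work lies.
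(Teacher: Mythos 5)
The paper states Theorem~\ref{thm:main} as a citation to Develin--Sturmfels without giving its own proof; the only proof-adjacent material is the discussion of the envelope $\cP_V$ around Equation~\eqref{eq:P_V} and Algorithm~\ref{algo:pseudo-vertices:ordinary}, which is exactly the machinery your outline reconstructs. Your two-step argument --- the polar correspondence between faces of $\cP_V$ and lower faces of the lifted configuration over $\Delta_{n-1}\times\Delta_{d-1}$, and the canonical lift $y_i=\min_k(v_{ik}-z_k)$ identifying type cells with faces of $\cP_V$ --- is the same approach as in the cited source, and the bounded/unbounded bookkeeping you flag as the main technical burden is precisely what \cite[Lemma~22]{DevelinSturmfels04} handles, a lemma the paper itself invokes at exactly that point.
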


In view of this result the most natural version of the tropical convex hull problem
perhaps is the one that asks to compute the pseudo-vertices together with all the bounded
types.  This can be achieved via applying a suitable modification of an algorithm of
Kaibel and Pfetsch \cite{KaibelPfetsch02} for computing the face lattice of an ordinary
polytope from its vertex facet incidences to the output of
Algorithm~\ref{algo:pseudo-vertices:ordinary}.

For ordinary convex hull computations it is known that genericity assumptions allow for
additional and sometimes faster algorithms.  This is also the case in the tropical situation.
The point set $V\subset\TA^{d-1}$ is \emph{sufficiently generic} if no $k\times
k$-submatrix of $V$, viewed as a matrix, is tropically singular.  This condition is
equivalent to the property that the envelope $\cP_V$ is a simple polyhedron.  This is the
case if and only if the induced subdivision of the product of simplices is a
triangulation.  A $k$-dimensional polyhedron is \emph{simple} if each vertex is contained
in exactly $k$ facets; for details about polytopes and polyhedra see \cite{Ziegler95}.
The maximal cells of the polytopal complex dual to the type decomposition of $\tconv V$
are in bijection with the pseudo-vertices. If $(T_1,T_2,\dots,T_d)$ is the type of the
pseudo-vertex $w$ then the corresponding maximal cell contains precisely those vertices
$(e_i,e_j)$ of $\Delta_{n-1}\times\Delta_{d-1}$ which satisfy $i\in T_j$.

\begin{example}
  In fact, the point set $V$ in Example~\ref{ex:BlockYu-Fig1:1} is sufficiently generic,
  and hence the subdivision of $\Delta_3\times\Delta_2$ induced by $V$ is a triangulation.
  For instance, the tropical vertex $v_1$ has the type $({1}, {1}, {1234})$, and it
  corresponds to the maximal cell with vertices
  \[ \{(e_1,e_1),(e_1,e_2),(e_1,e_3),(e_2,e_3),(e_3,e_3),(e_4,e_3)\} \, ,\] a $5$-simplex.
  See the Table~\ref{tab:triangulation} for the complete list of pseudo-vertices versus
  maximal cells in the triangulation; the vertex $(e_i,e_j)$ is abbreviated as $ij$.
\end{example}

\begin{table}[hbt]
  \caption{Triangulation of $\Delta_3\times\Delta_2$ dual to $\tconv V$ decomposed into types.}
  \label{tab:triangulation}
  \renewcommand{\arraystretch}{0.9}
  \begin{tabular*}{.75\linewidth}{@{\extracolsep{\fill}}ccc@{}}
    \toprule
    label & maximal cell in triangulation & min.\ generator of $I^*$ \\
    \midrule
    $v_1$ & $11$ $12$ $13$ $23$ $33$ $43$ & $x_{21}x_{22}x_{31}x_{32}x_{41}x_{42}$ \\ 
    $v_2$ & $12$ $21$ $22$ $23$ $32$ $43$ & $x_{11}x_{13}x_{31}x_{33}x_{41}x_{42}$ \\ 
    $v_3$ & $11$ $21$ $31$ $32$ $33$ $43$ & $x_{12}x_{13}x_{22}x_{23}x_{41}x_{42}$ \\ 
    $v_4$ & $12$ $21$ $32$ $41$ $42$ $43$ & $x_{11}x_{13}x_{22}x_{23}x_{31}x_{33}$ \\[1ex] 
    $w_1$ & $11$ $21$ $31$ $32$ $41$ $43$ & $x_{12}x_{13}x_{22}x_{23}x_{33}x_{42}$ \\ 
    $w_2$ & $12$ $21$ $22$ $32$ $42$ $43$ & $x_{11}x_{13}x_{23}x_{31}x_{33}x_{41}$ \\ 
    $w_3$ & $11$ $12$ $21$ $23$ $32$ $43$ & $x_{13}x_{22}x_{31}x_{33}x_{41}x_{42}$ \\ 
    $w_4$ & $11$ $12$ $23$ $32$ $33$ $43$ & $x_{13}x_{21}x_{22}x_{31}x_{41}x_{42}$ \\ 
    $w_5$ & $11$ $12$ $21$ $32$ $41$ $43$ & $x_{13}x_{22}x_{23}x_{31}x_{33}x_{42}$ \\ 
    $w_6$ & $11$ $21$ $23$ $32$ $33$ $43$ & $x_{12}x_{13}x_{22}x_{31}x_{41}x_{42}$ \\ 
    \bottomrule
  \end{tabular*}
\end{table}

Block and Yu \cite{BlockYu06} investigated the relationship of tropical convexity to
commutative algebra.  We will review some of their ideas in the following.  The situation
is particularly clear if the points considered are sufficiently generic.

Let $\KK[x_{11},x_{12},\dots,x_{nd}]$ be the polynomial ring in $nd$ indeterminates over
the field $\KK$. The indeterminate $x_{ij}$ will be assigned the \emph{weight} $v_{ij}$,
the $j$-th coordinate of the point $v_i$ in our sequence $V$.  \emph{Weights} of monomials
are extended additively, that is, the weight of $x^\alpha=\prod x_{ij}^{\alpha_{ij}}$ is
$\sum\alpha_{ij}v_{ij}$.  The \emph{weight} $\initial_V(f)$ of polynomial $f$ is the sum
of its terms of maximal weight.  This defines a partial term ordering on
$\KK[x_{11},x_{12},\dots,x_{nd}]$.  Let $J$ be the determinantal ideal generated by all
$2\times 2$-minors.  Then
\[
\initial_V(J) \ := \ \GeneratedBy{\initial_V(f)}{f\in J}
\]
is the \emph{initial ideal} of $J$ with respect to $V$.

\begin{proposition}[{\cite[Proposition~4]{BlockYu06}}]
  The points in $V$ are sufficiently generic if and only if $\initial_V(J)$ is a
  square-free monomial ideal.
\end{proposition}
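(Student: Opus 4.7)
The plan is to recognize the ideal $J$ of $2\times 2$-minors of the matrix of indeterminates $(x_{ij})$ as the defining (toric) ideal of the Segre embedding of $\mathbb{P}^{n-1}\times\mathbb{P}^{d-1}$. Under this identification the variable $x_{ij}$ corresponds to the vertex $(e_i,e_j)$ of $\Delta_{n-1}\times\Delta_{d-1}$, and the weight vector prescribed by $V$ is exactly the lifting that produces the regular subdivision of $\Delta_{n-1}\times\Delta_{d-1}$ referenced in Theorem~\ref{thm:main}. So the proposition translates into a question purely about the initial ideal of a toric ideal with respect to a regular height function.

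Once the translation is set up, I would invoke the standard Gröbner-basis dictionary for toric ideals (Sturmfels, \emph{Gr\"obner Bases and Convex Polytopes}, Ch.~8): for any lift $\omega$ of the vertices of a point configuration $A$, the radical of $\initial_\omega(I_A)$ is the Stanley--Reisner ideal of the regular polyhedral subdivision $\Delta_\omega(A)$; hence $\initial_\omega(I_A)$ is a monomial ideal if and only if $\Delta_\omega(A)$ is a triangulation, and it is moreover squarefree precisely when that triangulation is unimodular. Applying this to $A=\{(e_i,e_j)\}$ and $\omega=V$ reduces the claim to comparing two geometric conditions on the regular subdivision induced by $V$: (a) that it is a triangulation, and (b) that, if it is a triangulation, every maximal simplex is unimodular.

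The step (a) is immediate from what was already stated in the paragraph preceding the proposition, namely that $V$ is sufficiently generic if and only if the envelope $\cP_V$ is simple if and only if the induced regular subdivision of $\Delta_{n-1}\times\Delta_{d-1}$ is a triangulation. Step (b) is automatic because the vertex matrix of $\Delta_{n-1}\times\Delta_{d-1}$ is totally unimodular: the configuration $\{(e_i,e_j)\}$ lies on a face of the cube and every set of $n+d-2$ of its affinely independent vertices spans a simplex of normalized volume one. Consequently every triangulation of $\Delta_{n-1}\times\Delta_{d-1}$ is unimodular, so in this specific setting ``monomial'' and ``squarefree monomial'' coincide for $\initial_V(J)$. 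Chaining the equivalences yields: $V$ sufficiently generic $\Longleftrightarrow$ subdivision is a triangulation $\Longleftrightarrow$ $\initial_V(J)$ is a (squarefree) monomial ideal.

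The main obstacle is checking that the dictionary is applied with the correct convention (the excerpt uses $\min/+$ with $\initial_V(f)$ being the \emph{maximal}-weight part, which forces a sign choice on the lifting and a matching choice between lower and upper convex hull) and that the totally unimodular structure of $\Delta_{n-1}\times\Delta_{d-1}$ is genuinely used to bypass the distinction between arbitrary and unimodular triangulations. If one prefers a self-contained argument avoiding the radical computation, the squarefree direction can instead be extracted by exhibiting the explicit reduced Gr\"obner basis of $J$ associated to a triangulation: for each maximal cell $\sigma$ of the triangulation the corresponding generator of $\initial_V(J)$ is the squarefree monomial $\prod_{(i,j)\notin\sigma} x_{ij}$, as illustrated in Table~\ref{tab:triangulation}.
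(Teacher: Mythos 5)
The survey does not prove this statement itself; it cites \cite[Proposition~4]{BlockYu06}, and the argument there is indeed the one you reconstruct: identify $J$ with the toric ideal of the configuration $\{(e_i,e_j)\}$ (the Segre/$\Delta_{n-1}\times\Delta_{d-1}$ picture), invoke Sturmfels' dictionary between initial ideals of toric ideals and regular subdivisions, use that every triangulation of a product of two simplices is unimodular, and combine with the equivalence (already recorded in the paragraph preceding the proposition) between sufficient genericity and the induced subdivision being a triangulation. So your route matches the original proof in substance and structure.

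Two small inaccuracies are worth fixing. First, your justification of unimodularity is off: a full-dimensional simplex in $\Delta_{n-1}\times\Delta_{d-1}$ has $n+d-1$ (not $n+d-2$) vertices, and ``lying on a face of the cube'' does not by itself force normalized volume one for every lattice simplex. The right reason is that the matrix with columns $(e_i,e_j)$ is totally unimodular (it is the vertex--edge incidence matrix of the bipartite graph $K_{n,d}$), from which unimodularity of all triangulations follows. Second, in your optional ``explicit Gr\"obner basis'' alternative, the squarefree monomials $\prod_{(i,j)\notin\sigma}x_{ij}$ ranging over maximal cells $\sigma$ are the minimal generators of the Alexander dual $I^*$, not of $\initial_V(J)$; the generators of $\initial_V(J)$ are the minimal non-faces of the triangulation (e.g.\ the quadratics and one cubic in Example~\ref{ex:BlockYu-Fig1:2}), so this variant argument does not establish squarefreeness as stated. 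Your flagging of the max-weight vs.\ lower-hull convention is appropriate; the two bookkeeping choices must be aligned before Sturmfels' Theorem~8.3 and Corollaries~8.4, 8.9 can be quoted verbatim, but this is a matter of sign normalization, not of mathematical content.
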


A square-free monomial in $\KK[x_{11},x_{12},\dots,x_{nd}]$ corresponds to a subset of the
indeterminates $x_{11},x_{12},\dots,x_{nd}$ and conversely.  The unique minimal set of
generators of a square-free monomial ideal is the set of minimal non-faces of a unique
finite simplicial complex on the vertices $x_{11},x_{12},\dots,x_{nd}$.  Starting with our
ideal $I:=\initial_V(J)$ this simplicial complex is the \emph{initial complex}
$\Delta_V(J)$.  The reverse construction can be applied to any finite simplicial complex:
Its minimal non-faces generate the \emph{Stanley-Reisner} ideal of the complex.  In
particular, the initial ideal $I$ is the Stanley-Reisner ideal of the initial complex
$\Delta_V(J)$.  The complements of the maximal faces of $\Delta_V(J)$ generate another
squarefree monomial ideal, the \emph{Alexander dual} $I^*$.  For details see
\cite[Chapter~1]{MillerSturmfels05}.  The whole point of this discussion is that
$\Delta_V(J)$ coincides with the triangulation of $\Delta_{n-1}\times\Delta_{d-1}$ induced
by~$V$ \cite[Lemma~5]{BlockYu06}.  We arrive at
Algorithm~\ref{algo:pseudo-vertices:duality} below as an alternative to
Algorithm~\ref{algo:pseudo-vertices:ordinary}.

\begin{algorithm}
  \dontprintsemicolon

  \Input{$V\subset\TA^{d-1}$ finite, in general position}
  \Output{types of the pseudo-vertices of $\tconv V$}

  $J \leftarrow \langle \text{$2\times 2$-minors of the $n\times d$-matrix $(x_{ij})$}\rangle$ \;
  $I \leftarrow \initial_V(J)$ \;
  $I^* \leftarrow \text{Alexander dual of $I$}$ \;
  \Return $I^*$ \;

  \caption{Computing the pseudo-vertices via Alexander duality.}
  \label{algo:pseudo-vertices:duality}
\end{algorithm}

\begin{example}\label{ex:BlockYu-Fig1:2}
  We continue our Example~\ref{ex:BlockYu-Fig1:1}.  In this case the initial ideal
  $I:=\initial_V(J)$ reads
  \begin{align*}
    I \ = \ \langle
    &x_{11}x_{22},
    x_{11}x_{42},
    x_{12}x_{21}x_{33},
    x_{12}x_{31},
    x_{13}x_{21},
    x_{13}x_{22},
    x_{13}x_{31},
    x_{13}x_{32},
    x_{13}x_{41},
    x_{13}x_{42},\\
    &x_{22}x_{31},
    x_{22}x_{33},
    x_{22}x_{41},
    x_{23}x_{31},
    x_{23}x_{41},
    x_{23}x_{42},
    x_{31}x_{42},
    x_{33}x_{41},
    x_{33}x_{42}
    \rangle \, .
  \end{align*}
  Notice that the generator $x_{12}x_{21}x_{33}$ is of degree three, whence $I$ is not
  homogeneous.  The Alexander dual $I^*$ is generated by the monomials listed in the third
  column of Table~\ref{tab:BlockYu-Fig1:types}.
  
  We would like to point out that there are typos in the description of $I$ in \cite[page
  109]{BlockYu06}, but the description of $I^*$ is correct.  The initial complex
  $\Delta_V(J)$, or equivalently the triangulation dual to $\tconv V$ subdivided into its
  bounded types, is the pure $5$-dimensional simplicial complex whose facets are the
  complements of the sets corresponding to the ten minimal generators of $I^*$; the
  maximal cells of $\Delta_V(J)$ are listed in the second column of
  Table~\ref{tab:BlockYu-Fig1:types}.  It is known that the $f$-vector of any two
  triangulations of a product of simplices (without new vertices) is the same; in this
  case it reads $(12, 48, 92, 93, 48, 10)$.
\end{example}

The following is the main algebraic result on this topic.

\begin{theorem}[{\cite[Theorem 1]{BlockYu06}}]\label{thm:resolution}
  If the points in $V$ are sufficiently generic then $\tconv V$ supports a minimal free
  resolution of the ideal $I^*$, as a polytopal complex.
\end{theorem}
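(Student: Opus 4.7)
The plan is to apply the cellular-resolution machinery of Bayer and Sturmfels. Recall that a polytopal complex $X$ whose vertices are labelled by a generating set $m_1,\dots,m_r$ of a monomial ideal $M$ (and whose faces are labelled by the least common multiple of their vertex labels) supports a free resolution of $M$ precisely when, for every monomial $b$, the subcomplex
\[
X_{\le b} \ := \ \bigSetOf{F\in X}{m_F \text{ divides } b}
\]
is either empty or $\KK$-acyclic; the resolution is \emph{minimal} when no proper face carries the same label as a face containing it.

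First I would identify the labels explicitly. By Theorem~\ref{thm:main} the pseudo-vertices of $\tconv V$ are in bijection with the maximal faces of the triangulation $\Delta_V(J)$ of $\Delta_{n-1}\times\Delta_{d-1}$, and under this bijection each minimal generator of $I^*$ is the monomial complementary to the vertex set of the corresponding maximal cell. Equivalently, if the pseudo-vertex $w$ has type $(T_1,\dots,T_d)$, its label is $m_w=\prod_{i\notin T_j}x_{ij}$, exactly as listed in the third column of Table~\ref{tab:BlockYu-Fig1:types}. More generally, a bounded type $F$ is determined by its coordinate-wise-intersected type $(T_1^F,\dots,T_d^F)$, and one checks that the LCM of the vertex labels of $F$ equals $\prod_{i\notin T_j^F}x_{ij}$.

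Next I would verify the acyclicity condition. Since $I^*$ is square-free it suffices to take square-free $b$, say $b=\prod_{(i,j)\in S^c}x_{ij}$ for some $S\subseteq[n]\times[d]$. A bounded type $F$ satisfies $m_F\mid b$ exactly when $i\in T_j^F$ for every $(i,j)\in S$, and by Proposition~\ref{prop:DS:Lemma10} the union of the corresponding cells is
\[
Y_S \ = \ \bigSetOf{x\in\tconv V}{v_i\in x+\bar S_j \text{ for all } (i,j)\in S} \, .
\]
Each of the defining conditions $v_i\in x+\bar S_j$ is a finite intersection of tropical inequalities and therefore cuts out a tropically convex subset of $\TA^{d-1}$; hence $Y_S$ is a finite intersection of tropically convex sets and is itself tropically convex. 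Any nonempty tropically convex set is contractible via the straight-line tropical homotopy $t\mapsto t\odot x\oplus x_0$ to a fixed basepoint $x_0\in Y_S$, so $Y_S$ is either empty or contractible, and in particular $\KK$-acyclic.

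The hardest step, I expect, is arguing that contractibility of the underlying set $Y_S$ transfers cleanly to the labelled subcomplex $X_{\le b}$: one needs to check that $Y_S$ is precisely the union of the cells of $X_{\le b}$, that the polyhedral structure induced from $\tconv V$ makes $Y_S$ a genuine subcomplex of $X$, and that the tropical homotopy may be chosen to respect this cell structure. Once this is done, the Bayer--Sturmfels criterion yields a cellular free resolution of $I^*$ over $\KK[x_{11},\dots,x_{nd}]$. Minimality follows from sufficient genericity: by Theorem~\ref{thm:main} the envelope $\cP_V$ is simple, so adjacent bounded types differ in at least one coordinate of the type vector, and their labels differ accordingly, so no two comparable faces of $\tconv V$ carry the same monomial.
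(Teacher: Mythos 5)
The paper does not prove Theorem~\ref{thm:resolution}; it is stated as a citation of Block and Yu's Theorem~1, with a pointer to Miller--Sturmfels for background on cellular resolutions. Your proposal, which verifies the Bayer--Sturmfels acyclicity criterion directly, is in the spirit of Block and Yu's argument (they phrase it via the ``cohull'' of $I^*$ being the envelope $\cP_V$, whose bounded complex is $\tconv V$; see also the paper's remark after the theorem, citing \cite[Remark~7]{BlockYu06}, that a \emph{simple} polyhedron supports a minimal free resolution on its bounded subcomplex). So the route you take is correct and essentially the intended one; the difference is that you rebuild the criterion from scratch rather than invoking the hull-resolution machinery wholesale.

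Two points you flag as potential gaps in fact go through cleanly and are worth making explicit. First, the LCM computation and the identification $|X_{\le b}|=Y_S$ both rest on the fact that the type of a bounded cell is the componentwise intersection of the types of its pseudo-vertices; equivalently, $G$ is a face of $F$ iff $\type(G)\supseteq\type(F)$ componentwise. Granting this (it is part of Develin--Sturmfels' structure theory), one checks that $F\in X_{\le b}$ iff $F\subseteq Y_S$, that $X_{\le b}$ is therefore closed under taking faces, and that $\bigcup_{F\in X_{\le b}}F=Y_S$ by picking for each $x\in Y_S$ the unique cell with $x$ in its relative interior. Second, your tropical convexity argument for $Y_S$ is correct: each condition $v_i\in x+\bar S_j$ is equivalent to the system $x_k-x_j\le v_{ik}-v_{ij}$ for all $k$, and a single constraint of the form $x_k-x_j\le c$ is tropically convex because $\min(\lambda+u_k,\mu+w_k)-\min(\lambda+u_j,\mu+w_j)\le\max(u_k-u_j,\,w_k-w_j)$. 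The one genuinely imprecise step is your homotopy formula $t\mapsto t\odot x\oplus x_0$, which does not retract to $x_0$ as written; you should instead cite the contractibility of tropically convex sets from \cite[Theorem~2]{DevelinSturmfels04}. Finally, your minimality argument is fine, but note that the fact that comparable cells have distinct types (hence distinct labels) is automatic from the type decomposition being a genuine polyhedral complex; genericity enters earlier, in guaranteeing that $I^*$ is squarefree and that pseudo-vertices biject with minimal generators.
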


This says that the combinatorics of the type decomposition of $\tconv V$ completely
controls the relations among the generators of the ideal $I^*$.  For details on cellular
resolutions the reader is referred to \cite[Chapter~4]{MillerSturmfels05}.

\begin{example}
  We explain in slightly more detail where this leads to for our running example.
  Abbreviating $S:=\KK[x_{11},x_{12},\dots,x_{43}]$, the cellular resolution of
  Theorem~\ref{thm:resolution} is the \emph{complex} (in the homological sense)
  \begin{equation}\label{eq:resolution}
    0 \ \leftarrow S^1 \ \leftarrow \ S^{10} \ \leftarrow \ S^{12} \ \leftarrow \ S^3 \ \leftarrow 0
  \end{equation}
  of free $S$-modules.  The degrees $10$, $12$, and $3$ of the modules correspond to the
  $f$-vector of the type decomposition of $\tconv V$; see Example~\ref{ex:BlockYu-Fig1:1}.
  The arrows are homomorphisms of $S$-modules (to be read off the combinatorics of $\tconv
  V$) with the property that concatenating two consecutive arrows gives the zero map; the
  precise maps for this specific example are given in \cite[Example~10]{BlockYu06}.
  Knowing the complex~\eqref{eq:resolution} (with its maps) allows to reconstruct the
  combinatorics of the types induced by $V$.
\end{example}

Even if one is not interested in algebraic applications, Theorem~\ref{thm:resolution}
opens up an additional line of attack for the algorithmic problem to compute all the types
together with the pseudo-vertices.  This is due to the fact that a simple polyhedron, such
as the envelope $\cP_V$ in the sufficiently generic case, supports a unique minimal free
resolution on the polytopal subcomplex of its bounded faces \cite[Remark~7]{BlockYu06}.
That is to say, in order to compute the types it suffices to compute any free resolution
and to simplify it to a minimal one afterwards \cite[Algorithm~2]{BlockYu06}.

\begin{remark}
  In ordinary convexity there are at least two algorithms to compute convex hulls
  essentially via sidedness queries: the beneath-and-beyond method (which iteratively
  produces a triangulation) and gift wrapping \cite{AvisBremnerSeidel97,Beneath-Beyond}.
  In general, it is not clear to what extent these also exist in tropical versions.
  However, in the planar case, that is, $d=3$, the cyclic ordering of the tropical
  vertices can be computed by an output-sensitive algorithm in $O(n\log h)$ time
  \cite[Theorem~5.3]{Joswig05}, where $n$ is the number of generators and $h$ is the
  number of tropical vertices, by a suitable translation from the situation in the
  ordinary case \cite{Chan96}.
\end{remark}

\section{Minimal Tropical Halfspaces}
\label{sec:halfspaces}

A key theorem about ordinary polytopes states that an ordinary convex polytope, that is,
the ordinary convex hull of finitely many points in $\RR^d$ is the same as the
intersection of finitely many affine halfspaces \cite[Theorem 2.15]{Ziegler95}. For
tropical polytopes there is a similar statement.

\begin{theorem}[{\cite[Theorem~3.6]{Joswig05}}]
  The tropical polytopes are precisely the bounded intersections of tropical halfspaces.
\end{theorem}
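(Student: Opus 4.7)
The plan is to prove the two inclusions separately. For the forward direction, fix $P = \tconv V$ with $V = \{v_1, \ldots, v_n\}$ finite. Boundedness is immediate: after normalising any representative to $x^0$, each coordinate of a tropical linear combination of the $v_i$ lies between the minimum and maximum of that coordinate over $V$, so $P$ sits in a bounded region of $\TA^{d-1}$. To exhibit $P$ as a finite intersection of closed tropical halfspaces, I would use the type decomposition of Theorem~\ref{thm:type-decomposition}: the complement of $P$ in $\TA^{d-1}$ is covered by the (finitely many) unbounded cells $X_T$, each of which has some empty type entry $T_k$ and escapes to infinity in the direction of sector $S_k$. For each such unbounded cell I would select a pseudo-vertex $w$ on its boundary with $P$, translate the tropical hyperplane $H(0)$ so that its apex lies at $w$, and take the closed tropical halfspace equal to the complement of the open sector $-w + S_k$. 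The signed characterisation of halfspaces (the theorem following Proposition~\ref{prop:singular}) guarantees this is a genuine closed tropical halfspace, it contains all of $P$ by construction, and it excludes the relative interior of $X_T$. Intersecting over the finitely many unbounded cells recovers $P$.

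For the reverse direction, let $P = H_1 \cap \cdots \cap H_m$ be a bounded intersection of closed tropical halfspaces. Each $H_i$ is tropically convex: unpacking the defining inequality $\min_{k \in I_i}(a^{(i)}_k + x_k) \le \min_{\ell \notin I_i}(a^{(i)}_\ell + x_\ell)$ and applying it to a tropical combination $\lambda \odot x \oplus \mu \odot y$ is a direct calculation using that $\min$ distributes over addition. Hence $P$ is tropically convex. Now decompose
\[
P \ = \ \bigcup_\sigma Q_\sigma, \qquad Q_\sigma \ := \ \bigcap_{i=1}^m \bigl( -a^{(i)} + \bar S_{\sigma(i)} \bigr),
\]
where $\sigma$ runs over all functions selecting one closed sector of each $H_i$. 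Each $Q_\sigma$ is a finite intersection of closed sectors, therefore both ordinarily and tropically convex, and bounded as a subset of $P$; that is, each $Q_\sigma$ is a polytrope in the sense of \cite{JoswigKulas08}. A standard fact about polytropes is that a bounded polytrope equals the tropical convex hull of its finitely many ordinary vertices. Letting $W_\sigma$ be these vertices and $W := \bigcup_\sigma W_\sigma$ (a finite set), we have $W \subset P$, each $Q_\sigma = \tconv W_\sigma \subset \tconv W$, and therefore $P \subset \tconv W$; the reverse inclusion follows from tropical convexity of $P$. Thus $P = \tconv W$ is a tropical polytope.

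The main obstacle is the polytrope fact invoked in the reverse direction, namely that a bounded polytrope is tropically spanned by its ordinary vertices. One verifies it by combining Proposition~\ref{prop:DS:Lemma10} with the observation that a polytrope is cut out by difference inequalities $x_k - x_j \le c_{jk}$; in this restricted shape the ordinary vertices can be shown to be tropically extreme, and conversely every point of the polytrope is realised as a tropical combination of ordinary vertices by a direct inductive argument on the defining inequalities. The forward direction's subtlety is combinatorial bookkeeping: one must ensure that exactly one halfspace is chosen per unbounded type cell and that the resulting finite intersection really excludes every point outside $P$; this is where the ``facet sectors'' column of Table~\ref{tab:BlockYu-Fig1:types} gives a concrete template for how a minimal such family is built.
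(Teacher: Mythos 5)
The paper cites this result from \cite{Joswig05} without reproducing a proof, so the comparison can only be against what a correct argument must look like. Your reverse direction (decomposing a bounded intersection of halfspaces into the sector-cells $Q_\sigma$, each a bounded ordinary polytope that is tropically convex, then tropically spanning by the ordinary vertices) is sound in outline, though the ``standard fact about polytropes'' carries most of the weight and really does need the argument you only sketch.

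The forward direction, however, has a genuine gap, and it is precisely the step you wave off as ``combinatorial bookkeeping.'' You propose to use only halfspaces of the form \emph{complement of a single open sector} at a pseudo-vertex $w$, i.e.\ halfspaces $w+\bigcup_{j\ne k}\bar S_j$ consisting of $d-1$ closed sectors. Two things go wrong. First, the claim ``it excludes the relative interior of $X_T$'' is false: if the unbounded cell $X_T$ has more than one empty type entry, its relative interior is \emph{not} contained in the single open sector $w+S_k$, so the halfspace fails to cut it off. Second, and more fundamentally, the intersection of \emph{all} halfspaces of this restricted $(d-1)$-sector form that contain $P$ can be a strict superset of $P$. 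Take $V=\{(0,0,0)\}$ in $\TA^2$, so $P$ is a single point. The only pseudo-vertex is the origin, and the complements of $S_1,S_2,S_3$ there intersect in $\{x\in\TA^2:\ x^0\text{ has at least two zero coordinates}\}$, which is a one-dimensional set of three rays strictly containing $P$. To actually carve out $P$ one needs the narrower halfspaces of Section~\ref{sec:halfspaces}, in particular the one-sector cornered halfspaces $c_k(V)+\bar S_k$ of Lemma~\ref{lem:corner}, and more generally the minimal halfspaces whose sector sets $H$ can have any size between $1$ and $d-1$; in the paper's running example the five minimal halfspaces use sector sets $\{3\},\{1\},\{2\},\{2,3\},\{1,3\}$, only two of which have the $(d-1)$-sector form you restrict to. So the missing idea is precisely the minimal-halfspace (or cornered-halfspace) machinery: one has to argue that for every $x\notin\tconv V$ there is a halfspace $(a,H)$ with $\bigcup_{j\in H}T_j(a)=[n]$ and $x\notin(a,H)$, and the set $H$ cannot be assumed to have $d-1$ elements.
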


For $a\in\TA^{d-1}$ and $H\subset[d]$ we use the notation $(a,H)$ for the closed tropical
halfspace $a+\bigcup_{i\in H} \bar S_i$.  A tropical halfspace is \emph{minimal} with
respect to a tropical polytope $\tconv V$ if it is minimal with respect to inclusion among
all tropical halfspaces containing $\tconv V$.  This raises the question how these can be
computed.  We begin with a straightforward procedure to check if one tropical halfspace is
contained in another.

\begin{lemma}\label{lem:halfspace-inclusion}
  Let $(a,H)$ and $(b,K)$ be tropical halfspaces.  Then $(a,H)\subseteq(b,K)$ if and only
  if $H\subseteq K$ and $a-b\in\bar S_k$.
\end{lemma}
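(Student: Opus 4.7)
The plan is to reduce the inclusion to a sector-by-sector analysis. Using the decompositions $(a,H) = \bigcup_{i\in H}(a+\bar S_i)$ and $(b,K) = \bigcup_{k\in K}(b+\bar S_k)$, the containment $(a,H)\subseteq(b,K)$ is equivalent to $c+\bar S_i\subseteq\bigcup_{k\in K}\bar S_k$ for every $i\in H$, where $c:=a-b$.

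First I would establish the necessity of $H\subseteq K$ by an asymptotic argument. For $i\in H$, choose $y\in\bar S_i$ with $y_j-y_i$ very large for every $j\ne i$. Then $z:=c+y$ has $z_i=c_i$ while $z_k$ is arbitrarily large for $k\ne i$, so $z$ lies in the open sector $S_i$. Since distinct open sectors are pairwise disjoint, $z\in\bar S_k$ forces $k=i$, and hence $i\in K$.

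Assuming $H\subseteq K$, I would then analyze the translation condition on $c$. Parametrize $y\in\bar S_i$ by its normalized representative ($y_i=0$, $y_j\ge 0$). Containment $z=c+y\in\bigcup_{k\in K}\bar S_k$ is equivalent to the minimum of $z$ being attained at some index in $K$. Setting $y_{k_0}=0$ for a single $k_0\ne i$ and all other $y_j$ very large makes the unique argmin of $z$ equal $\{k_0\}$ whenever $c_{k_0}<c_i$; this is a witness to failure of containment if $k_0\notin K$. Conversely, if $c_i\le c_k$ for every $i\in H$ and every $k\notin K$, then for any admissible $y$ and any $k\notin K$ we have $z_k\ge c_k\ge c_i=z_i$, so the index $i\in K$ always lies in the argmin of $z$. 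Hence both directions of the iff follow.

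The main obstacle will be matching the coordinate characterization $\max_{i\in H}c_i\le\min_{k\notin K}c_k$ produced by the analysis to the stated sector-membership form $a-b\in\bar S_k$, and in particular correctly handling boundary cases where the minimum of $z$ is attained simultaneously at several indices inside and outside $K$. Once the sector formulation is pinned down, the proof itself is a straightforward case analysis about which indices realize the minimum of $c+y$.
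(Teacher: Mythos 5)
Your analysis is sound and in fact more explicit than the paper's own proof, which reformulates the inclusion as ``for all $x\in(0,H)$ we have $a+x-b\in(0,K)$'', substitutes $x=0$ to obtain $a-b\in(0,K)$, then declares ``we can assume $a=b$ without loss of generality'' (so that $H\subseteq K$ is ``immediate'') and dismisses the converse as ``similar''. Your sector-by-sector argument with extremal choices of $y$ supplies the details that sketch elides. One small imprecision in your write-up: in the sufficiency step the correct conclusion is that \emph{some} index of $K$ (not necessarily your fixed $i$) realizes the minimum of $z$; what you actually prove, $z_k\ge z_i$ for all $k\notin K$ with $i\in K$, already gives that the minimum over $[d]$ is the minimum over $K$, which is all that is needed.

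Your closing concern is well founded and is really a correction to the statement, not merely a presentational obstacle. The characterization your analysis produces, $\max_{i\in H}(a-b)_i\le\min_{k\notin K}(a-b)_k$, is the correct one, and it is \emph{not} equivalent to ``$a-b\in\bar S_k$'' (whose index $k$ is unbound) under the natural reading $a-b\in\bigcup_{k\in K}\bar S_k=(0,K)$. With that reading the ``if'' direction of the lemma fails: take $d=4$, $H=\{1\}$, $K=\{1,2\}$, $a-b=(5,0,1,10)$. Then $H\subseteq K$ and $a-b\in\bar S_2\subseteq(0,K)$, yet for $y=(0,100,0,100)\in\bar S_1$ the vector $(a-b)+y=(5,100,1,110)$ has its unique minimum at index $3\notin K$. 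Your inequality correctly rejects this example, since $\max_{i\in H}(a-b)_i=5\not\le 1=\min_{k\notin K}(a-b)_k$. In the only place the paper invokes the lemma, namely the proof of Lemma~\ref{lem:corner}, both halfspaces are single sectors with $H=K=\{k\}$, and there your inequality specializes exactly to $a-b\in\bar S_k$; so the downstream use is unaffected, but the general statement of Lemma~\ref{lem:halfspace-inclusion} should be amended to $H\subseteq K$ together with $\max_{i\in H}(a_i-b_i)\le\min_{j\notin K}(a_j-b_j)$.
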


\begin{proof}
  The property $(a,H)\subseteq(b,K)$ is equivalent to the following: for all $x\in (0,H)$
  we have $a+x-b \in (0,K)$.  So assume the latter.  Then, as $x\in (0,H)$, this implies
  $a-b\in (0,K)$.  Now we can assume that $a=b$ without loss of generality and $H\subseteq
  K$ is immediate.  The converse follows in a similar way.
\end{proof}

It is a feature of tropical convexity that $d$ of the minimal tropical halfspaces of a
tropical polytope can be read off the generator matrix $V$ right away.  To this end we
define the \emph{$k$-th corner} of $\tconv V$ as
\[
c_k(V) \ := \ (-v_{1,k})\odot v_1 \oplus (-v_{2,k})\odot v_2 \oplus \dots \oplus (-v_{n,k})\odot v_n \, .
\]
By construction it is clear that each corner is a point in $\tconv V$.

\begin{lemma}\label{lem:corner}
  The closed tropical halfspace $c_k(V)+\bar S_k$ is minimal with respect to $\tconv V$.
\end{lemma}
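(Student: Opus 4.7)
My plan is to split the lemma into two claims: (a) containment $\tconv V \subseteq c_k(V)+\bar S_k$, and (b) minimality.

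For (a), since $c_k(V)+\bar S_k$ is a closed tropical halfspace, it is in particular tropically convex, so it suffices to verify that each generator $v_i$ lies in it. The condition $v_i \in c_k(V)+\bar S_k$ unravels (via the description $\bar S_k=\{x\in\TA^{d-1}:x_k\le x_j\text{ for all }j\}$) to
\[
v_{i,k}-c_k(V)_k \ \le \ v_{i,j}-c_k(V)_j \text{ for all } j,
\]
i.e.\ $v_{i,j}-v_{i,k}\ge c_k(V)_j-c_k(V)_k=\min_\ell(v_{\ell,j}-v_{\ell,k})$ (using the normalization $c_k(V)_k=0$). This is immediate from the definition of $c_k(V)$.

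For (b), I would take any tropical halfspace $(b,K)$ satisfying $\tconv V\subseteq (b,K)\subseteq (c_k(V),\{k\})$ and show it must equal $(c_k(V),\{k\})$. Applying Lemma~\ref{lem:halfspace-inclusion} to the second inclusion forces $K\subseteq\{k\}$, and since a tropical halfspace has at least one sector, $K=\{k\}$. So the chain becomes $\tconv V\subseteq b+\bar S_k\subseteq c_k(V)+\bar S_k$. The first inclusion, applied to each generator $v_i$, yields $v_{i,j}-v_{i,k}\ge b_j-b_k$ for all $i,j$. Taking the minimum over $i$ and rearranging gives
\[
(c_k(V)-b)_k \ = \ -b_k \ \le \ c_k(V)_j - b_j \ = \ (c_k(V)-b)_j
\]
for every $j$, so $c_k(V)-b\in\bar S_k$. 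By Lemma~\ref{lem:halfspace-inclusion} this implies $(c_k(V),\{k\})\subseteq (b,\{k\})$, which combined with the reverse inclusion gives equality, as required.

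The routine steps are the coordinate book-keeping; the only real obstacle I anticipate is keeping the directions of translations straight when moving between the set-theoretic description $b+\bar S_k$ and the coordinate inequalities that characterize it, and making sure that in applying Lemma~\ref{lem:halfspace-inclusion} the correct side of the inclusion corresponds to the correct difference $a-b$ lying in the correct sector.
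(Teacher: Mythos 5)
Your proof is correct and takes essentially the same route as the paper: show containment of $\tconv V$ in the cornered halfspace, then use Lemma~\ref{lem:halfspace-inclusion} twice to pin down any candidate halfspace squeezed between $\tconv V$ and $c_k(V)+\bar S_k$. The one small difference is in how you obtain $c_k(V)-b\in\bar S_k$: the paper observes that $c_k(V)$ is itself a tropical combination of the generators, hence lies in $\tconv V\subseteq b+\bar S_k$, which gives the membership immediately; you instead unroll the generator inequalities $v_{i,j}-v_{i,k}\ge b_j-b_k$ and take the minimum over $i$. These are the same computation, just packaged differently — the paper's observation that each corner lies in $\tconv V$ (stated just before the lemma) is what your min-over-$i$ step re-derives.
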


\begin{proof}
  By symmetry we can assume that $k=1$.  Then the vectors $(-v_{1,1})\odot v_1,\,
  (-v_{2,1})\odot v_2, \dots ,(-v_{n,1})\odot v_n$ are the Euclidean coordinate vectors of
  the points $v_1,v_2,\dots,v_n$, and their pointwise minimum $c:=c_1(V)$ is the ``lower
  left'' corner of their tropical convex hull.  By construction the tropical halfspace
  $c+\bar S_1$ contains $\tconv V$.  Suppose that $c+\bar S_1$ is not minimal.  Then there
  must be some other tropical halfspace $w+\bar S_K$ contained in $c+\bar S_1$ which still
  contains $\tconv V$.  By Lemma~\ref{lem:halfspace-inclusion} we have $K=\{1\}$.
  Moreover, since $c\in\tconv V$, we have $c-w\in\bar S_1$ and so $c+\bar S_1\subseteq
  w+\bar S_1$, again by Lemma~\ref{lem:halfspace-inclusion}.  We conclude that $w=c$, and
  this completes our proof.
\end{proof}

It is now a consequence of the following proposition that the corners are pseudo-vertices.

\begin{proposition}[{\cite[Proposition 3.3]{Joswig05}}]
  The apex of a minimal tropical halfspace with respect to $\tconv V$ is a pseudo-vertex
  with respect to $\tconv V$.
\end{proposition}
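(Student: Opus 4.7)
The plan is to argue the contrapositive: if $a$ is not a pseudo-vertex, then $(a,H)$ fails to be minimal. Writing $T = \type_V(a)$, the containment $\tconv V \subseteq (a,H)$ is equivalent to the covering condition $\bigcup_{k \in H} T_k = [n]$, and this depends only on $T$. I will use the remark following Algorithm~\ref{algo:pseudo-vertices:ordinary} to reformulate the pseudo-vertex property: $a$ is a pseudo-vertex if and only if the tight equations $a_k - a_j = v_{i,k} - v_{i,j}$ (one for each $i \in T_j \cap T_k$) determine $a$ uniquely modulo $\RR\vones_d$, equivalently, the graph $G_T$ on vertex set $[d]$ with an edge $\{j,k\}$ whenever $T_j \cap T_k \ne \emptyset$ is connected. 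So I assume $G_T$ is disconnected, fix a non-trivial partition $[d] = A \sqcup B$ respecting the components, and observe that each support set $S_i := \{k : i \in T_k\}$ is a clique in $G_T$ and so lies entirely in $A$ or in $B$, which induces a partition $[n] = N_A \sqcup N_B$.

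The key perturbation will be $a' := a + \epsilon \sum_{k \in A} e_k$ with $\epsilon > 0$ small enough that no strict inequality $v_{i,k} - a_k > \min_j(v_{i,j} - a_j)$ flips. A short case-split over $i \in N_A$ versus $i \in N_B$ confirms that $\type_V(a') = T$, so $(a', H) \supseteq \tconv V$ by the covering condition. A direct computation gives $(a' - a)^0 = \epsilon \sum_{k \in A} e_k$, whose zero coordinates are precisely those indexed by $B$; hence $a' - a \in \bar S_k$ exactly when $k \in B$. Lemma~\ref{lem:halfspace-inclusion} then yields $(a', H) \subseteq (a, H)$, and the inclusion is strict because $a' \ne a$, provided $H \cap B \ne \emptyset$.

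The covering condition forces $H \cap B \ne \emptyset$ as soon as $N_B \ne \emptyset$ (indices $i \in N_B$ can only be covered by some $T_k$ with $k \in B$), and symmetrically for $A$, so in the generic case where both $N_A$ and $N_B$ are nonempty the argument already contradicts minimality. The main obstacle I anticipate is the degenerate case, say $N_B = \emptyset$: then $T_k = \emptyset$ for every $k \in B$, minimality of $H$ forces $H \subseteq A$, and the perturbation above only produces equality of halfspaces rather than strict inclusion. My remedy is to switch to a one-coordinate shift $a' := a + \epsilon e_{k_0}$ for some chosen $k_0 \in B$: the type is still preserved for small $\epsilon$, and now $(a' - a)^0 = \epsilon e_{k_0}$ lies in $\bar S_k$ for every $k \ne k_0$, so $a' - a \in (0, H)$ because $\emptyset \ne H \subseteq A \not\ni k_0$. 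Either way, the strict inclusion $(a', H) \subsetneq (a, H)$ contradicts minimality of $(a, H)$.
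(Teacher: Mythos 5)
The survey only cites this proposition from the original reference without reproducing a proof, so there is no in-paper argument to compare against; I will evaluate your proposal on its own. Your setup is fine: the graph $G_T$ on $[d]$, the equivalence between connectedness of $G_T$ and $a$ being a pseudo-vertex, the type-preservation under a small shift constant on the components, and the degenerate case ($N_B=\emptyset$, one-coordinate shift by $\epsilon e_{k_0}$ with $k_0\in B$) all check out. The gap is in the generic case, in your invocation of Lemma~\ref{lem:halfspace-inclusion}. You read the lemma's condition as ``$a'-a\in\bar S_k$ for some $k$ in the prescribed sector set'' and conclude $(a',H)\subseteq(a,H)$ from $a'-a\in\bar S_k$ for $k\in H\cap B$. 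That reading fails once $|H|>1$. Unwinding the containment $(a',H)\subseteq(a,H)$ directly, one finds it is equivalent to $(a'-a)_i\le(a'-a)_l$ for \emph{every} $i\in H$ and \emph{every} $l\notin H$; this reduces to $a'-a\in\bar S_k$ exactly when $H=\{k\}$, which is the only case in which the survey itself applies the lemma (in Lemma~\ref{lem:corner}). With $a'-a=\epsilon\sum_{k\in A}e_k$ the correct criterion forces $H\cap A=\emptyset$ or $B\subseteq H$, and in the generic case ($N_A,N_B\ne\emptyset$) neither need hold.

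Here is a concrete instance where the step breaks: $d=4$, $v_1=(0,0,1,1)$, $v_2=(1,1,0,0)$, $a=0$. Then $T=(\{1\},\{1\},\{2\},\{2\})$, $G_T$ has the two components $A=\{1,2\}$ and $B=\{3,4\}$, and $H=\{1,3\}$ is a minimal cover of $[n]=\{1,2\}$ meeting both $A$ and $B$. Take $a'=\epsilon(e_1+e_2)$. The point $y=(\epsilon,M,M,\epsilon/2)$ with $M\gg\epsilon>0$ lies in $(a',H)$, since $y-a'=(0,M,M,\epsilon/2)\in\bar S_1$, but it does not lie in $(a,H)$, since its unique minimum is at coordinate $4\notin H$. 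So $(a',H)\not\subseteq(a,H)$; the perturbed halfspace is incomparable with, not smaller than, the original. (Shifting the other way, $a''=a-\epsilon\sum_{k\in A}e_k$ does give $(a'',H)\subsetneq(a,H)$, but then $\tconv V\not\subseteq(a'',H)$.) So the generic case of your argument genuinely fails and needs a different idea. I will add that in this example I could not locate any halfspace strictly contained in $(0,\{1,3\})$ that still contains $\tconv V$, which suggests the proposition as transcribed in the survey may be missing a hypothesis from \cite{Joswig05}; but regardless of that, the inclusion step above is the gap in your proof.
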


The tropical halfspace $c_k(V)+\bar S_k$ is called the \emph{$k$-th cornered} tropical
halfspace with respect to $\tconv V$.  The \emph{cornered hull} of $\tconv V$ is the
intersection of all $d$ cornered tropical halfspaces.  Each tropical halfspace is
tropically convex.  But a tropical halfspace is convex in the ordinary sense if and only
if it consists of a single sector.  This implies that the cornered hull of a tropical
polytope is tropically convex and also convex in the ordinary sense.  Since it is also
bounded it is a polytrope.  In fact, the cornered hull of $V$ is the smallest polytrope
containing $V$.

\begin{example}
  For the matrix $V$ introduced in Example \ref{ex:BlockYu-Fig1:1} the three corners are
  the pseudo-vertices
  \[
  c_1(V)=w_1=(1,1,0), \quad c_2(V)=w_6=(0,5,0), \quad \text{and} \quad c_3(V)=v_1=(0,3,6) \, .
  \]
  This notation fits with Table~\ref{tab:BlockYu-Fig1:types}.  The cornered hull is shaded
  lightly in Figure~\ref{fig:BlockYu-Fig1}.
\end{example}

\begin{lemma}
  Let $(a,H)$ be any tropical halfspace containing $V$, and let
  $(T_1,T_2,\dots,T_d)=\type_V(a)$.  Then $\bigcup_{j\in H}T_j=[n]$.
\end{lemma}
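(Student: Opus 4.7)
The plan is to prove this by directly unpacking the definitions of $\type_V$ and of a tropical halfspace; there is essentially no obstacle.

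First I would recall the two defining ingredients. By definition, $(a,H) = a + \bigcup_{j\in H}\bar S_j$, so a point $x$ lies in $(a,H)$ iff $x - a$ lies in some closed sector $\bar S_j$ with $j\in H$. On the other hand, the $k$-th entry of $\type_V(a)$ is $T_k = \{\,i\in[n] : v_i\in a+\bar S_k\,\}$. The inclusion $\bigcup_{j\in H}T_j\subseteq[n]$ is immediate since every $T_j$ is a subset of $[n]$.

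For the reverse inclusion, I would fix an arbitrary index $i\in[n]$ and use the hypothesis $V\subseteq (a,H)$. This gives $v_i\in a+\bigcup_{j\in H}\bar S_j$, so there exists some $j\in H$ with $v_i-a\in\bar S_j$, i.e.\ $v_i\in a+\bar S_j$. By the definition of the type entry this says exactly $i\in T_j$, and hence $i\in\bigcup_{j\in H}T_j$. Since $i\in[n]$ was arbitrary we obtain $[n]\subseteq\bigcup_{j\in H}T_j$, completing the proof.

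The only subtlety worth a brief comment is that a point $v_i-a$ may belong to more than one closed sector (this happens precisely on the boundary between sectors, i.e.\ on the tropical hyperplane $H(0)$ shifted to $a$), so the sets $T_j$ need not be disjoint; but this poses no difficulty for the union statement, since we only need that at least one of the sectors realizing $v_i-a$ has its index in $H$. No finer structural result about $\tconv V$ or about minimality of $(a,H)$ is needed — the lemma holds for any tropical halfspace containing $V$.
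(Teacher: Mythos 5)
Your proof is correct and takes essentially the same approach as the paper: both arguments simply unpack the definitions of $\type_V$ and of the closed tropical halfspace $(a,H)=a+\bigcup_{j\in H}\bar S_j$, then observe that containment of every generator $v_i$ in the halfspace means each $i\in[n]$ lands in some $T_j$ with $j\in H$. Your version is merely a bit more explicit, including the (correct) side remark that the $T_j$ need not be disjoint.
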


\begin{proof}
  The type entry $T_j$ contains (the indices of) the generators in $V$ which are contained
  in the closed sector $a+\bar S_j$.  Since all the generators are contained in each facet
  defining tropical halfspace we have $\bigcup_{j\in H}T_j=[n]$.
\end{proof}

From this lemma it is clear that for a minimal tropical halfspace containing $\tconv V$
the type entries of $a$ with respect to $V$ form a set covering of $[n]$ which is minimal
with respect to inclusion.  It may happen that one pseudo-vertex occurs as the apex of two
distinct minimal tropical halfspaces.  A tropical halfspace is \emph{locally minimal} if
it is minimal with respect to inclusion among all tropical halfspaces with a fixed apex
and containing $V$.

\begin{algorithm}[htb]
  \dontprintsemicolon

  \Input{$x\in\TA^d$ point in the boundary of a tropical polytope $\tconv V$}
  \Output{set of tropical halfspaces with apex $x$ locally minimal with respect to $P$}

  compute $T=(T_0,\dots,T_d)=\type_V(x)$ \;
  compute all (with respect to inclusion) minimal set coverings of $[n]$ by $T_0,\dots,T_d$ \;
  $\cH\leftarrow\text{pairs of $x$ with all these minimal set covers}$ \;
  \Return{$\cH$} \;

  \caption{Computing the minimal tropical halfspaces (locally).}
  \label{algo:local-facet-defining}
\end{algorithm}

The input to Algorithm~\ref{algo:global-facet-defining} below can be the set of
pseudo-vertices with respect to any generating system, in particular, with respect to the
tropical vertices.

\begin{algorithm}[hbt]
  \dontprintsemicolon

  \Input{$W\subset\TA^d$ pseudo-vertices of $\tconv V$}
  \Output{set of minimal tropical halfspaces with respect to $\tconv V$}

  $\cH\leftarrow\emptyset$ \;
  \ForEach{$x\in W$}{
    $\cH'\leftarrow\text{locally minimal tropical halfspaces at $x$ according to
      Algorithm~\ref{algo:local-facet-defining}}$ \;
    \ForEach{$(x,H')\in\cH'$}{
      \ForEach{$(y,H)\in\cH$}{
        \eIf{$(x,H')$ is contained in $(y,H)$}{
          remove $(y,H)$ from $\cH$ \;
        }{
          \If{$(y,H)$ is contained in $(x,H')$}{
            remove $(x,H')$ from $\cH'$ \;
          }
        }
      }
    }
    $\cH\leftarrow\cH\cup\cH'$ \;
  }
  \Return{$\cH$} \;

  \caption{Computing the minimal tropical halfspaces (globally).}
  \label{algo:global-facet-defining}
\end{algorithm}

Notice that, as stated, Algorithm~\ref{algo:local-facet-defining} contains the NP-complete
problem of finding a set covering of $[n]$ of minimal cardinality as a subproblem.  The
following example raises the question if, in order to compute all (globally) minimal
tropical halfspaces it can be avoided to compute \emph{all} locally minimal tropical
halfspaces for all pseudo-vertices.  This is not clear to the author.

\begin{example}\label{ex:BlockYu-Fig1:3}
  Consider the tropical convex hull $\tconv(v_1,v_2,v_3,v_4)$ shown in
  Figure~\ref{fig:BlockYu-Fig1}.  The points $w_4=(0,3,2)$ and $w_5=(0,3,4)$ both are
  pseudo-vertices with respect to the given system of generators, see
  Table~\ref{tab:BlockYu-Fig1:types}.  At $w_5$ there are precisely two locally minimal
  tropical halfspaces, namely $(w_5,\{1,3\})$ and $(w_5,\{2,3\})$.  The only locally
  minimal tropical halfspace at $w_4$ is $(w_4,\{2,3\})$.  As $(w_4,\{2,3\})$ is contained
  in $(w_5,\{2,3\})$, the latter cannot be (globally) minimal.
  
  Both $(w_4,\{2,3\})$ and $(w_5,\{1,3\})$ are (globally) minimal.  Altogether there are
  five minimal tropical halfspaces: the three remaining ones are $(v_1,\{3\})$,
  $(w_1,\{1\})$, and $w_6,\{2\})$.  For each pseudo-vertex which is the apex of a minimal
  tropical halfspace the corresponding \emph{facet sectors} are listed in the final column
  of Table~\ref{tab:BlockYu-Fig1:types}.
\end{example}

\section{Matroid Subdivisions and Tree Arrangements}

It turns out that tropical polytopes or, dually, regular subdivisions of products of
ordinary simplices carry information which can be, in a way, extended to matroid
subdivisions of hypersimplices.  These are known to be relevant for studying questions on
the Grassmannians of $d$-planes in $n$-space over some field $\KK$
\cite{Kapranov93,SpeyerSturmfels04,HerrmannJensenJoswigSturmfels08}.

An ordinary polytope in $\RR^m$ whose vertices are $0/1$-vectors is a \emph{matroid
  polytope} if each edge is parallel to a difference of basis vectors $e_i-e_j$ for some
distinct $i$ and $j$ in $[m]$.  A \emph{matroid} of rank $r$ on the set $[m]$ is a set of
$r$-element subsets of $[m]$ whose characteristic vectors are the vertices of a matroid
polytope.  The elements of a matroid are its \emph{bases}.  It is a result of Gel\cprime
fand, Goresky, MacPherson, and Serganova \cite{GelfandGoreskyMacPhersonSerganova87} that
this definition describes the same kind of objects as more standard ones~\cite{White87}.
Each $m\times n$-matrix $M$ (with coefficients in an arbitrary field $\KK$) gives rise to
a matroid $\cM$ of rank $r$ on the set $[m]$ as follows: the columns of $M$ are indexed by
$[m]$, and $r$ is the rank of $M$.  Those $0/1$-vectors which correspond to bases of the
column space of $M$ are the bases of $\cM$.  We denote the matroid polytope of a matroid
$\cM$ as $\MatroidPolytope{\cM}$.  The \emph{$(r,m)$-hypersimplex} $\Delta(r,m)$ is the
matroid polytope of the \emph{uniform matroid} of rank $r$ on the set $[m]$, that is, the
matroid whose set of bases is $\tbinom{[m]}{r}$, by which we denote the set of all
$r$-element subsets of $[m]$.

The relationship to tropical convexity works as follows.  By the Separation Theorem from
ordinary convexity \cite[\S2.2]{Gruenbaum03} each vertex $v$ of an ordinary polytope $P$
can be separated from the other vertices by an affine hyperplane $H$.  The intersection of
$P$ with $H$ is again an ordinary polytope, and its combinatorial type does not depend on
$H$; this is the \emph{vertex figure} $P/v$ of $P$ with respect to $v$.  Moreover,
whenever we have a lifting function $\lambda$ on $P$ inducing a regular subdivision, then
$\lambda$ induces a lifting function of each vertex figure $P/v$, and this way we obtain a
regular subdivision of $P/v$.  In the case of the $(d,n+d)$-hypersimplex each vertex
figure is isomorphic to the product of simplices $\Delta_{d-1}\times\Delta_{n-1}$.  Hence
regular subdivisions of $\Delta(d,n+d)$ yield configurations of $n$ points in $\TA^{d-1}$
at each vertex of $\Delta(d,n+d)$.  The situation for the hypersimplices is special in
that the converse also holds, that is, each regular subdivision of a vertex figure can be
lifted to a (particularly interesting kind of) regular subdivision of $\Delta(d,n+d)$.

\begin{proposition}[{\cite[Corollary 1.4.14]{Kapranov93}}]
  Each configuration of $n$ points in $\TA^{d-1}$ can be lifted to a regular matroid
  decomposition of $\Delta(d,n+d)$.
\end{proposition}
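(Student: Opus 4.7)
My strategy is to build the lift as the vector of tropical maximal minors of an enlarged matrix and then to invoke the known equivalence between tropical Pl\"ucker vectors and matroid subdivisions of the hypersimplex (Kapranov~\cite{Kapranov93}, see also \cite{SpeyerSturmfels04,Speyer06}): a height function on $\binom{[n+d]}{d}$ induces a matroid decomposition of $\Delta(d,n+d)$ exactly when it satisfies the tropical Pl\"ucker relations. It therefore suffices to engineer the extra rows of the matrix so that, at one distinguished vertex, the restriction of this lift to the vertex figure recovers the regular subdivision of $\Delta_{n-1}\times\Delta_{d-1}$ induced by~$V$.

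\textbf{Construction and key computation.} Enlarge the $n\times d$ matrix $V=(v_{ij})$ to an $(n+d)\times d$ matrix $\tilde V$ by appending $d$ rows with $\tilde v_{n+j,k}:=0$ if $k=j$ and $\tilde v_{n+j,k}:=M$ otherwise, where $M\in\RR$ is chosen large enough to dominate every tropical determinant that will appear (equivalently, set the off-diagonal entries to $\infty$ in $\RR\cup\{\infty\}$). Define $\omega(T):=\tdet\tilde V_T$ for $T\in\binom{[n+d]}{d}$, where $\tilde V_T$ is the $d\times d$ submatrix on the rows indexed by $T$. For the distinguished vertex $S_0:=\{n+1,\dots,n+d\}$ the vertices of the vertex figure $\Delta(d,n+d)/S_0\cong\Delta_{n-1}\times\Delta_{d-1}$ correspond to the sets $T(i,j):=(S_0\setminus\{n+j\})\cup\{i\}$ for $(i,j)\in[n]\times[d]$, and the block-diagonal structure of the appended rows forces the optimal permutation in the Leibniz formula~\eqref{eq:tdet} for $\tdet\tilde V_{T(i,j)}$ to match each $n+k$ with column $k$ for $k\neq j$ and row $i$ with column $j$; this yields $\omega(S_0)=0$ and $\omega(T(i,j))=v_{ij}$. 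Hence the induced regular subdivision on the vertex figure at $S_0$ is exactly the one induced by~$V$.

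\textbf{Main obstacle.} The substantive ingredient is the Pl\"ucker/matroid equivalence. That tropical maximal minors of $\tilde V$ satisfy the three-term tropical Pl\"ucker relations can be seen by tropicalising the classical Pl\"ucker identities for the matrix $(t^{\tilde v_{ij}})$ over the Puiseux series field and sending $t\to 0$; that tropical Pl\"ucker vectors induce matroid decompositions of the hypersimplex is the content of Kapranov's theorem. Once these two facts are in place, the construction above closes the argument.
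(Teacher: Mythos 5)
Your proposal is essentially the paper's own proof. Both arguments extend $V$ by the tropical identity matrix (you append $d$ rows to an $n\times d$ matrix; the paper appends $d$ columns to the transposed $d\times n$ matrix --- same thing), define the lift on $\binom{[n+d]}{d}$ by tropical maximal minors, invoke the known fact that tropical Pl\"ucker vectors induce matroid subdivisions of $\Delta(d,n+d)$, and then check directly that at the vertex $S_0=\{n+1,\dots,n+d\}$ the induced subdivision of the vertex figure $\Delta_{n-1}\times\Delta_{d-1}$ is the one induced by $V$, computing $\omega(S_0)=0$ and $\omega(T(i,j))=v_{ij}$ exactly as the paper intends by ``one can verify directly.'' The only point where you supply more detail than the paper is in justifying that the tropical minors satisfy the three-term Pl\"ucker relations: you sketch a tropicalization-of-Puiseux-lifts argument, while the paper declares this ``easy to see''; that sketch is a standard and adequate way to fill the gap (one should take generic Puiseux coefficients so that no unexpected cancellation spoils the valuation computation), so this is a harmless elaboration rather than a different route.
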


A \emph{matroid subdivision} of a matroid polytope is a polytopal subdivision with the
property that each cell is a matroid polytope.  Kapranov's original proof
\cite{Kapranov93} uses non-trivial methods from algebraic geometry.  Here we give an
elementary proof, which makes use of the techniques developed in
\cite{HerrmannJensenJoswigSturmfels08}.

\begin{proof}
  Let $V$ be the $d\times n$-matrix which has the $n$ given points as its columns, and let
  $\bar V$ be the $d\times n$-matrix produced by concatenating $V$ with the tropical
  identity matrix of size $d\times d$.  The tropical identity matrix has zeros on the
  diagonal and infinity off the diagonal.  Each $d$-subset $S$ of $[n+d]$ defines $d$
  columns of $\bar V$ and this way a $d\times d$-submatrix $\bar V_S$.  It is easy to
  see that the map
  \[
  \pi \, : \, S \, \mapsto \, \tdet \bar V_S
  \]
  is a finite tropical Pl\"ucker vector.  A \emph{finite tropical Pl\"ucker vector} is a
  map from the set $\tbinom{[n+d]}{d}$ to the reals satisfying the following property: For
  each subset $T$ of $[n+d]$ with $d-2$ elements the minimum
  \[
  \min \bigl\{ \pi(Tij) + \pi(Tkl), \, \pi(Tik) + \pi(Tjl), \, \pi(Til) + \pi(Tjk) \bigr\}
  \]
  is attained at least twice, where $i,j,k,l$ are the pairwise distinct elements of
  $[n+d]\setminus T$ and $Tij$ is short for $T\cup\{i,j\}$.  Clearly, the map $\pi$ is a
  lifting function on the hypersimplex $\Delta(d,n+d)$, and thus it induces a regular
  subdivision.  That the tropical Pl\"ucker vectors, in fact, induce matroid
  decompositions is a known fact \cite[\S1.2]{Kapranov93}, \cite[Proposition
  2.2]{Speyer04}.

  One can verify directly that the point configuration $V$ in $\TA^{d-1}$ is (in the sense
  of Theorem~\ref{thm:main}) dual to the regular subdivision induced by $\pi$ at the
  vertex figure of $\Delta(d,n+d)$ at the vertex $e_{n+1}+e_{n+2}+\dots+e_{n+d}$.
\end{proof}

Notice that our computation in the proof above makes use of ``$\infty$'' as a coordinate,
that is, from now on we work in the \emph{tropical projective space}
\[
\TP^{d-1} \ := \
((\RR\cup\{\infty\})^d\setminus\{(\infty,\infty,\dots,\infty)\})/\RR\vones_d \, .
\]
The tropical projective space $\TP^{d-1}$ is a compactification of the tropical torus
$\TA^{d-1}$ with boundary.  In fact, $\TA^{d-1}$ should be seen as an open ordinary
regular simplex of infinite size which is compactified by $\TP^{d-1}$ in the natural way.

\begin{example}\label{ex:pluecker}
  Starting out with the point configuration $V$ from Example~\ref{ex:BlockYu-Fig1:1} with
  $d=3$ and $n=4$ we obtain
  \begin{equation}\label{eq:V_bar}
    \bar V \ = \
    \begin{pmatrix}
      0 & 0 & 0 & 1 & 0 & \infty & \infty \\
      3 & 5 & 0 & 5 & \infty & 0 & \infty \\
      6 & 2 & 1 & 0 & \infty & \infty & 0
    \end{pmatrix} \, .
  \end{equation}
  The resulting tropical Pl\"ucker vector is given in Table~\ref{tab:pluecker}, and the
  induced matroid subdivision of $\Delta(d,n+d)$ is listed in Table~\ref{tab:matroid}.
  For each of the ten maximal cells the corresponding set of bases is listed.  These
  maximal cells are in bijection with the pseudo-vertices of $\tconv V$ with respect to
  $V$ as given in Tables~\ref{tab:BlockYu-Fig1:types} and~\ref{tab:triangulation}.
\end{example}

\begin{table}[hbt]
  \caption{Tropical Pl\"ucker vector $\pi$ of $\bar V$ from Equation~\eqref{eq:V_bar}.}
  \label{tab:pluecker}
  \renewcommand{\arraystretch}{0.9}
  \small
  \begin{tabular*}{\linewidth}{@{\extracolsep{\fill}}llllllllllll@{}}
    \toprule
    123:2 & 124:3 & 125:5 & 126:2 & 127:3 & 134:0 & 135:4 & 136:1 & 137:0 & 145:3 & 146:0 & 147:4 \\
    156:6 & 157:3 & 167:0 & 234:0 & 235:2 & 236:1 & 237:0 & 245:5 & 246:0 & 247:5 & 256:2 & 257:5 \\
    267:0 & 345:0 & 346:0 & 347:1 & 356:1 & 357:0 & 367:0 & 456:0 & 457:5 & 467:1 & 567:0 & \\
    \bottomrule
  \end{tabular*}
\end{table}  

\begin{table}[hbt]
  \caption{Matroid subdivision of $\Delta(3,7)$ induced by the tropical Pl\"ucker vector $\pi$
    in Table~\ref{tab:pluecker}.}
  \label{tab:matroid}
  \renewcommand{\arraystretch}{0.9}
  \begin{tabular*}{\linewidth}{@{\extracolsep{\fill}}cl@{}}
    \toprule
    label & \multicolumn{1}{c}{matroid bases} \\
    \midrule
    $v_1$ & \begin{small} 125 126 135 136 145 146 156 157 167 256 356 456 567 \end{small}\\
    $v_2$ & \begin{small} 124 125 127 145 157 234 235 237 245 246 256 257 267 345 357 456 567 \end{small} \\
    $v_3$ & \begin{small} 134 136 137 146 167 234 236 237 246 267 345 346 356 357 367 456 567 \end{small} \\
    $v_4$ & \begin{small} 124 127 145 147 157 234 237 246 247 267 345 347 357 456 457 467 567 \end{small} \\[1ex]
    $w_1$ & \begin{small} 134 137 146 167 234 237 246 267 345 346 347 357 367 456 467 567 \end{small} \\
    $w_2$ & \begin{small} 124 127 145 157 234 237 245 246 247 257 267 345 357 456 457 567 \end{small} \\
    $w_3$ & \begin{small} 123 124 125 126 127 134 137 145 146 157 167 234 235 237 246 256 267 345 357 456 567 \end{small} \\
    $w_4$ & \begin{small} 123 125 126 134 135 136 137 145 146 157 167 235 256 345 356 357 456 567 \end{small} \\
    $w_5$ & \begin{small} 124 127 134 137 145 146 147 157 167 234 237 246 267 345 347 357 456 467 567 \end{small} \\
    $w_6$ & \begin{small} 123 126 134 136 137 146 167 234 235 236 237 246 256 267 345 356 357 456 567 \end{small} \\
    \bottomrule
  \end{tabular*}
  \label{tab:BlockYu-Fig1:matroids}
\end{table}

The hypersimplex $\Delta(d,n+d)$ is the set of points
$(x_1,x_2,\dots,x_{n+d})\in\RR^{n+d}$ which satisfies the $2(n+d)$ linear inequalities
$0\le x_i\le 1$ and the linear equation $\sum x_i=d$.  The given inequalities are all
facet defining.  The facet defined by $x_i=0$ is the \emph{$i$-th deletion facet}, and the
facet $x_1=1$ is the \emph{$i$-th contraction facet} of $\Delta(d,n+d)$.  Each deletion
facet is isomorphic to $\Delta(d,n+d-1)$, and each contraction facet is isomorphic to
$\Delta(d-1,n+d-1)$.  Recursively, all faces of hypersimplices are hypersimplices.  Now
$\MatroidPolytope{\cM}$ is a subpolytope of $\Delta(d,n+d)$ whenever $\cM$ is a matroid of
rank $d$ on the set $[n+d]$, and the intersection of $\MatroidPolytope{\cM}$ with a facet
is again a matroid polytope, a \emph{deletion} or a \emph{contraction} depending on the
type of the facet.  This implies that each matroid subdivision of $\Delta(d,n+d)$ induces
a matroid subdivision on each facet.

\begin{table}[htb]
  \caption{Restriction $\pi_1$ of the tropical Pl\"ucker vector $\pi$ from
    Table~\ref{tab:pluecker} to the first contraction facet.}
  \label{tab:contraction}
  \renewcommand{\arraystretch}{0.9}
  \small
  \begin{tabular*}{\linewidth}{@{\extracolsep{\fill}}lllllllllllllll@{}}
    \toprule
    23:2 & 24:3 & 25:5 & 26:2 & 27:3 & 34:0 & 35:4 & 36:1 & 37:0 & 45:3 & 46:0 & 47:4 &
    56:6 & 57:3 & 67:0 \\
    \bottomrule
  \end{tabular*}
\end{table}

We want to look at the matroid subdivisions of $\Delta(d,n+d)$ for low rank $d$.  The
hypersimplex $\Delta(1,n+1)$ is an $n$-dimensional simplex without any non-trivial
(matroid) subdivisions.  For $d=2$ the situation is more interesting: Each matroid
subdivision of $\Delta(2,n+2)$ is dual to a tree with $n$ leaves \cite[\S1.3]{Kapranov93}.
For $d=3$ we have the following result.

\begin{theorem}[{\cite[Theorem~4.4]{HerrmannJensenJoswigSturmfels08}}]\label{thm:arrangements}
  The regular matroid subdivisions of $\Delta(3,n+3)$ bijectively correspond to the
  equivalence classes of arrangements of $n+3$ metric trees with $n+2$ labeled leaves.
\end{theorem}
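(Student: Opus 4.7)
The plan is to construct the bijection by reducing the rank-three case to the already-understood rank-two case via the facet structure of the hypersimplex. The forward direction is essentially forced. Given a regular matroid subdivision $\Sigma$ of $\Delta(3,n+3)$ coming from a tropical Pl\"ucker vector $\pi\colon\binom{[n+3]}{3}\to\RR$, I would restrict $\pi$ to each of the $n+3$ contraction facets. The $k$-th contraction facet is isomorphic to $\Delta(2,n+2)$, and the restriction $\pi_k(ij):=\pi(\{i,j,k\})$ is a tropical Pl\"ucker vector on $\binom{[n+3]\setminus\{k\}}{2}$: every four-term tropical Pl\"ucker relation for $\pi$ on a quadruple containing $k$ descends to a tropical Pl\"ucker relation among pairs for $\pi_k$. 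By Kapranov's rank-two correspondence, each $\pi_k$ encodes a metric tree $T_k$ with $n+2$ leaves labeled by $[n+3]\setminus\{k\}$, and collecting $(T_1,\dots,T_{n+3})$ produces the desired arrangement.

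For the reverse direction I would start with an arrangement and attempt to recover $\pi$. A natural guess is $\pi(\{i,j,k\})=\mathrm{dist}_{T_k}(i,j)+c_k$ for normalization constants $c_k$ depending only on $k$. The equivalence relation on arrangements should be defined precisely so that (i) for every triple $\{i,j,k\}$, the three possible definitions of $\pi(\{i,j,k\})$ using $T_i$, $T_j$, or $T_k$ agree after normalization, and (ii) the resulting $\pi$ satisfies all tropical Pl\"ucker relations on $\binom{[n+3]}{3}$. Condition (i) is a pairwise compatibility between the trees; condition (ii) is what has to be verified.

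The main obstacle is (ii). Fix a 4-subset $\{i,j,k,l\}\subset[n+3]$. For $n\ge 2$ there is a fifth label $m\notin\{i,j,k,l\}$, and then $i,j,k,l$ are all leaves of the tree $T_m$. The classical four-point condition for tree metrics states that among the three sums $\mathrm{dist}_{T_m}(i,j)+\mathrm{dist}_{T_m}(k,l)$, $\mathrm{dist}_{T_m}(i,k)+\mathrm{dist}_{T_m}(j,l)$, $\mathrm{dist}_{T_m}(i,l)+\mathrm{dist}_{T_m}(j,k)$ the minimum is attained at least twice; translating via the $c_k$'s yields the tropical Pl\"ucker relation for $\{i,j,k,l\}$. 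The boundary cases $n\in\{0,1\}$ are trivial since $\Delta(3,n+3)$ has no nontrivial matroid subdivisions.

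Injectivity follows because $\pi$ determines the subdivision uniquely up to the lineality space of tropical Pl\"ucker vectors, which is precisely what the equivalence relation on arrangements is designed to absorb; surjectivity is the content of the reconstruction. Theorem~\ref{thm:main} enters only to give the geometric interpretation of the arrangement at a single vertex figure as a tropical point configuration. I expect the subtlest point to be formulating the equivalence relation on arrangements so that it simultaneously encodes the normalization freedom of the $c_k$ and the cross-tree compatibility (i); pinning this down correctly is what turns the correspondence into a bijection rather than merely a surjection from arrangements onto subdivisions.
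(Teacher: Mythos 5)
The paper you are reading does not give its own proof of this theorem---it is cited from \cite{HerrmannJensenJoswigSturmfels08}---but it does set up the surrounding definitions, and your proposal is not entirely consistent with them. Your core idea is right and is indeed the heart of the matter: restricting a tropical Pl\"ucker vector $\pi$ on $\tbinom{[n+3]}{3}$ to a contraction facet yields a tropical Pl\"ucker vector on $\tbinom{[n+3]\setminus\{m\}}{2}$, which is a tree-like dissimilarity, and the three-term Pl\"ucker relation indexed by the singleton $T=\{m\}$ together with a four-set $\{i,j,k,l\}$ is exactly the four-point condition for the $m$-th tree restricted to those four leaves. (You phrase it as ``fix a four-set, find a fifth label $m$,'' but the relations are indexed by pairs $(m,\{i,j,k,l\})$ and every such pair must be checked; your argument does handle them all, just note the indexing.)

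The genuine gap is in how you handle the equivalence. First, the compatibility $\delta_i(j,k)=\delta_j(k,i)=\delta_k(i,j)$ is part of the \emph{definition} of an arrangement of metric trees in this framework, not something to be achieved after introducing free normalizers $c_k$; it holds automatically because $\pi(\{i,j,k\})$ is a function of the unordered triple, so $\pi(\{i,j,k\}):=\delta_k(i,j)$ is well-defined with no $c_k$ at all. Second, and more seriously, your injectivity argument claims that the subdivision determines $\pi$ ``up to the lineality space of tropical Pl\"ucker vectors,'' which the equivalence relation then absorbs. This is false: the set of $\pi$'s inducing a fixed regular matroid subdivision is the relative interior of a whole secondary cone inside the Dressian, typically of much higher dimension than the lineality space. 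Two such $\pi$'s need not differ by a lineality transformation, yet they must map to the same equivalence class. The equivalence relation in the theorem is therefore combinatorial, not affine: two arrangements of metric trees are equivalent when they induce the same \emph{arrangement of abstract trees}, meaning the same tree topologies with the same pairwise intersection patterns. Establishing that membership in the same secondary cone is equivalent to agreement of abstract tree arrangements---in both directions---is the real content missing from your sketch, and it cannot be reduced to a normalization bookkeeping as your $c_k$'s suggest.
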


It is easy to see where the $n+3$ trees come from: They are dual to the matroid
subdivisions induced on the $n+3$ contraction facets of $\Delta(3,n+3)$.  A sequence
$(\delta_1,\delta_2,\dots,\delta_n)$ is an \emph{arrangement} of metric trees if
$\delta_i$ is a tree metric on the set $[n]\setminus\{i\}$ satisfying
\[
\delta_i(j,k) \ = \ \delta_j(k,i) \ = \ \delta_k(i,j)
\]
for any three distinct $i,j,k\in[n]$.  Arrangements of metric trees are \emph{equivalent}
if they induce the same \emph{arrangement of abstract trees}, which roughly means that
their intersection patterns are the same; see \cite{HerrmannJensenJoswigSturmfels08} for
details.

\begin{example}
  Restricting the tropical Pl\"ucker vector $\pi$ on $\Delta(3,7)$ from
  Example~\ref{ex:pluecker} to the first contraction facet gives a tropical Pl\"ucker
  vector $\pi_1$ on the second hypersimplex $\Delta(2,6)$ (with coordinate directions
  labeled $2,3,\dots, 7$) shown in Table~\ref{tab:contraction}.  Since $\pi_1$ happens to
  take values between $0$ and~$6$ we can set $\delta_1(S):=3-(\pi_1(S)/6)$ for
  $S\in\tbinom{\{2,3,\dots,7\}}{2}$, which yields a metric on the set $\{2,3,\dots,7\}$.
  Written as one half of a symmetric matrix we have
  \[
  \delta_1 \ = \
  \begin{pmatrix}
    0 & 8/3 & 5/2 & 13/6 &  8/3 & 5/2 \\
    &   0 &   3 &  7/3 & 17/6 &   3 \\
    &     &   0 &  5/2 &    3 & 7/3 \\
    &     &     &    0 &    2 & 5/2 \\
    &     &     &      &    0 &   3 \\
    &     &     &      &      &   0
  \end{pmatrix} \, .
  \]
  Notice that the matroid subdivision of the first contraction facet induced by $\pi$ does
  not change if we add multiples of $(1,1,\dots,1)$ to $\pi$ or scale it by any positive
  real number.  The rescaling chosen above makes sure that $\delta_1$ takes values between
  two and three only.  This way the triangle inequality becomes valid automatically, which
  is why $\delta_1$ is indeed a metric.  Moreover, the metric $\delta_1$ is tree-like, and
  the corresponding metric tree is shown in Figure~\ref{fig:tree}.  For instance, one can
  check that $\delta_1(2,3)=8/3=13/12+1/6+17/12$.  The edge lengths on the tree can be
  computed via the Split Decomposition Theorem of Bandelt and Dress \cite{BandeltDress92}.
\end{example}

\begin{figure}[hbt]
  \includegraphics[width=.75\textwidth]{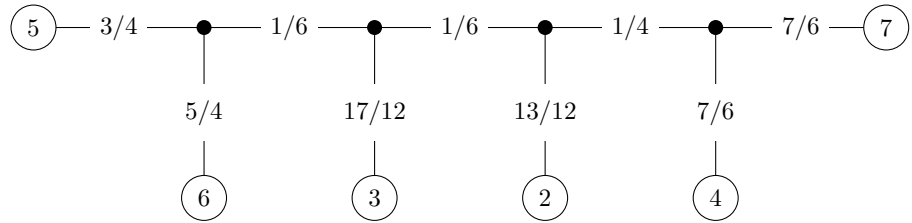}

  \caption{Caterpillar tree with six labeled leaves and edge lengths corresponding to the
    metric $\delta_1$.}
  \label{fig:tree}
\end{figure}

So far we looked at a single tree only.  Now we want to explore how the $n+3$ trees, one
for each contraction facet, interact.  In the plane $\TA^2$ a tropical line is the same as
a hyperplane.  That is to say, the tropical hyperplane/line
\[
H(a) \ = \ -a + \bigl(\RR_{\ge 0}e_1 \cup \RR_{\ge0}e_2 \cup \RR_{\ge0}e_3\bigr)
\]
is the union of the infinite rays into the three coordinate directions emanating from the
apex $-a$.  In particular, it can be seen as a metric tree with one trivalent internal node
and three edges of infinite length.  The taxa sit at the endpoints of those rays in the
compactification $\TP^2$.  The mirror image of $H(a)$ at its apex is its \emph{dual line}
\[
H^*(a) \ = \ -a - \bigl(\RR_{\ge 0}e_1 \cup \RR_{\ge0}e_2 \cup \RR_{\ge0}e_3\bigr) \, .
\]
Notice that $H^*(a)$ is a tropical line if one takes ``$\max$'' as the tropical addition
rather than ``$\min$''.  Any two tropical lines either share an infinite portion of some
ray or they meet in a single point, and the same holds for their duals.  The
\emph{arrangement of dual tropical lines} induced by a point sequence $V$ in $\TA^2$,
denoted as $H^*(-V)$, is the sequence of tropical lines with apices $v_1,v_2,\dots,v_n$.
In order to make the connection to matroid decompositions of hypersimplices we can instead
also look at the compactified version $\bar H^*(-V)$ which is the sequence of lines in
$\TP^2$ with the $n+3$ apices
$v_1,v_2,\dots,v_n,(0,\infty,\infty),(\infty,0,\infty),(\infty,\infty,0)$.  Now each dual
tropical line $\bar H^*(-v_i)$, for $1\le i\le n+3$, gives rise to a labeled tree by
recording the intersection pattern with the other trees.

\begin{figure}[htb]
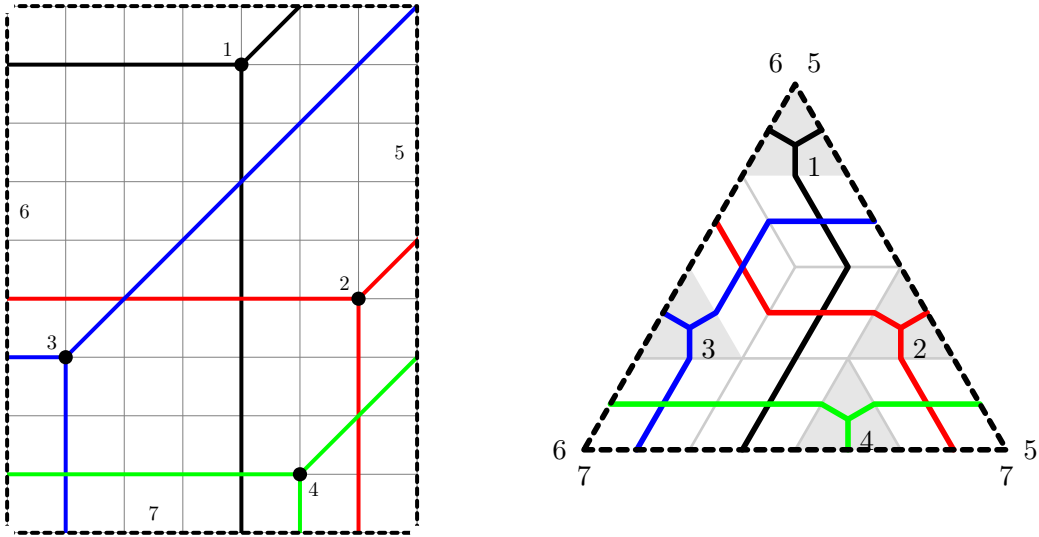

  \begin{minipage}[c]{.45\textwidth}\centering
    \includegraphics[height=\textwidth]{trop-polytope.1}
  \end{minipage}
  \quad
  \begin{minipage}[c]{.45\textwidth}\centering
    \includegraphics[height=.8\textwidth]{tree-arrangement.4}
  \end{minipage}
  
  \caption{Arrangement of seven trees, three of which cover the boundary of $\TP^2$.}
  \label{fig:trees}
\end{figure}

\begin{example}\label{ex:arrangement}
  The tree arrangement $H^*(-V)$ and its compactification $\bar H^*(-V)$ arising from our
  running example matrix $V$ introduced in Example~\ref{ex:BlockYu-Fig1:1} are shown in
  Figure~\ref{fig:trees} to the left.  The tree with apex $v_5=(0,\infty,\infty)$ occurs
  as the dashed line on the top and on the right of the rectangular section of $\TA^2$,
  the tree corresponding to $v_6=(\infty,0,\infty)$ is to the left, and the tree
  corresponding to $v_7=(\infty,\infty,0)$ is at the bottom.  For instance, the tree
  corresponding to $v_1$ receives the labeling shown in Figure~\ref{fig:tree}.  The
  subsequent Theorem~\ref{thm:orientedmatroid} can be visually verified for our example by
  comparing Figures~\ref{fig:BlockYu-Fig1} and~\ref{fig:trees}.
\end{example}

\begin{theorem}[{\cite[Theorem~6.3]{ArdilaDevelin07}}]\label{thm:orientedmatroid}
  Suppose that $V$ is a sufficiently generic sequence of points in $\TA^2$.  Then the
  polyhedral subdivision of $\TA^2$ induced by the tree arrangement $H^*(-V)$ coincides
  with the type decomposition induced by $V$.
\end{theorem}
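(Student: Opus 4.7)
The plan is to match the two decompositions cell-by-cell by reinterpreting the type of a point $x \in \TA^2$ as a sector-membership pattern with respect to the $n$ dual tropical lines. The first step is to compute the closed sectors of each dual line: since $H^*(0)$ is the pointwise negative of $H(0)$, its three closed sectors are $-\bar S_1, -\bar S_2, -\bar S_3$, and translating so the apex sits at $v_i$ shows that the closed sectors of $H^*(-v_i)$ are precisely the sets $v_i - \bar S_k$ for $k \in [3]$.

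Next I would unfold the definition of the type to obtain the key equivalence
\[
i \in T_k(x) \iff v_i \in x + \bar S_k \iff x \in v_i - \bar S_k,
\]
so the index $i$ lies in the entry $T_k(x)$ exactly when $x$ is contained in the $k$-th closed sector of $H^*(-v_i)$. Hence $\type_V(x)$ records, for every generator $v_i$, the complete collection of closed sectors of $H^*(-v_i)$ containing $x$, and two points share a common type if and only if they share this sector information for every $i$; by definition, this is the condition that they lie in the same cell of the common refinement of the arrangement $H^*(-V)$.

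To close the argument I would compare the defining inequalities. For a fixed type $T$, Proposition~\ref{prop:DS:Lemma10} writes $X_T$ as the intersection of the half-spaces $x_k - x_j \le v_{ik} - v_{ij}$ for $i \in T_j$, and each such half-space is exactly one of the supporting half-spaces that cut out the sector $v_i - \bar S_j$ of $H^*(-v_i)$. Thus $X_T$ coincides with the closed arrangement cell corresponding to $T$, and the two polyhedral complexes are identical. The main obstacle I anticipate is clean bookkeeping at the lower-dimensional cells, where an index $i$ appears in several entries $T_k$ because $x$ sits on a ray or at the apex of some $H^*(-v_i)$. This is precisely where the sufficient genericity of $V$ is needed: it rules out degenerate coincidences (no apex lying on another dual line, no three rays meeting away from any apex), so that the incidence combinatorics of the two complexes agree on the nose as cell complexes.
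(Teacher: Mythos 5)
The paper does not prove this theorem; it is quoted verbatim from Ardila--Develin and only verified visually on the running example. So I can only judge your argument on its own terms, and it is correct.

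Your central move is the equivalence $i\in T_k(x)\iff v_i\in x+\bar S_k\iff x\in v_i-\bar S_k$, which turns the type of $x$ into the record, for each $i$, of exactly which closed sectors of $H^*(-v_i)$ contain $x$. Since that record is the same data as the face of $H^*(-v_i)$ containing $x$ (one sector gives an open sector, two give a ray, three give the apex), types are the same data as cells of the common refinement of the $n$ subdivisions induced by the dual lines. Your check against the inequalities of Proposition~\ref{prop:DS:Lemma10} then confirms that, for fixed $i$ and $j$ with $i\in T_j$, the inequalities $x_k-x_j\le v_{ik}-v_{ij}$ over all $k$ carve out exactly $v_i-\bar S_j$, so $X_T$ is literally the intersection of the prescribed closed sectors. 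That closes the argument.

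One remark on your last paragraph: the cell-matching you have given is actually unconditional -- it does not use genericity anywhere, and indices appearing in several entries $T_k$ is the expected behavior on lower-dimensional cells even in the generic case, handled automatically by the sector/face dictionary. So genericity is not ``precisely where'' the bookkeeping needs help, and your argument in fact establishes the stated identity of polyhedral subdivisions without it. The hypothesis appears in the quoted statement because Ardila and Develin formulate their result in the language of tropical oriented matroids, where genericity is part of the set-up (it makes the dual subdivision of $\Delta_{n-1}\times\Delta_{d-1}$ a triangulation and keeps the arrangement simple), not because the equality of cell complexes fails otherwise. You could simply omit the genericity discussion, or note explicitly that your proof gives the general case.
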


The abstract arrangement of trees induced by the matroid decomposition of $\Delta(3,n+3)$
which in turn is induced by the point sequence $V$ is precisely the compactified tree
arrangement $\bar H^*(-V)$.

\begin{remark}
  The \emph{Cayley trick} from polyhedral combinatorics allows to view triangulations of
  $\Delta_{n-1}\times\Delta_{d-1}$ as \emph{lozenge tilings} of the dilated simplex
  $n\Delta_{d-1}$; see \cite{Santos05}.  For our Example~\ref{ex:arrangement} the
  corresponding lozenge tiling of $4\Delta_2$ is shown in Figure~\ref{fig:trees} to the
  right.  The tree arrangement $H^*(-V)$ partitions the dual graph of the tiling.
\end{remark}

\section{Computational Experiments with \polymake}

Most algorithms mentioned above are implemented in \polymake, version 2.9.4, which can be
downloaded from \url{www.polymake.de}.  After starting the program on the command line
(with the application \texttt{tropical} as default) an interactive shell starts which
receives commands in a Perl dialect.  For instance, to visualize our running
Example~\ref{ex:BlockYu-Fig1:1} it suffices to say:

\begin{small}
\begin{verbatim}
> $p = new TropicalPolytope<Rational>(POINTS=>[[0,3,6],[0,5,2],[0,0,1],[1,5,0]]);
> $p->VISUAL;
\end{verbatim}
\end{small}

We want to show how \polymake can be used for the investigation of tropical polytopes.
For instance we can compute the type of a cell in the type decomposition as follows.  This
is not a standard function, which is why it requires a sequence of commands.  Again the
pseudo-vertices and the types always refer to the fixed sequence of generators that we
started out with above.

\begin{small}
\begin{verbatim}
> print @{rows_labeled($p->PSEUDOVERTICES)};
0:0 5 0
1:0 4 -1
2:0 3 2
3:0 3 4
4:0 3 -1
5:0 0 1
6:0 0 -1
7:0 1 2
8:0 5 2
9:0 3 6

> print $p->ENVELOPE->BOUNDED_COMPLEX;
{0 1 2 4 8}
{2 3 7}
{2 4 5 6 7}
{3 9}
\end{verbatim}
\end{small}

The first command lists the pseudo-vertices (in no particular order), and the function
\texttt{rows\_labeled} is responsible for making the implicit numbering explicit which
arises from the order.  The next command lists all the bounded cells of the type
decomposition.  Each row corresponds to one maximal cell, namely the (tropical or
ordinary) convex hull of the pseudo-vertices with the given indices.  We obtain two
pentagons, one triangle, and one edge.  Now we want to compute the type of the first cell
(numbered `0'):

\begin{small}
\begin{verbatim}
> $indices = $p->ENVELOPE->BOUNDED_COMPLEX->[0];
\end{verbatim}
\end{small}  

This is just the sequence of indices of the pseudo-vertices defining this cell.  In the
following we produce an ordinary polytope which has these points as its vertices.  One
minor technical complication arises from the fact that the leading `0' used for coordinate
homogenization in the tropical world must be replaced by a `1' which makes the ordinary
polytope homogeneous.

\begin{small}
\begin{verbatim}
> $vertices = $p->PSEUDOVERTICES->minor($indices,range(1,$p->AMBIENT_DIM));
> $n_vertices = scalar(@{$vertices});
> $all_ones = new Vector<Rational>([ (1)x$n_vertices ]);
> $cell = new Polytope<Rational>(VERTICES => ($all_ones|$vertices));
> print $cell->VERTICES;
1 5 0
1 4 -1
1 3 2
1 3 -1
1 5 2
\end{verbatim}
\end{small}  

Now the type to be computed is just the type of any relatively interior point of this
ordinary polytope called \texttt{\$cell}.

\begin{small}
\begin{verbatim}
> $point = $cell->REL_INT_POINT;
> print $point;
1 4 2/5
\end{verbatim}
\end{small}  

This point must be translated back into the tropical world, and then we can call a
function to compute its type, which turns out to be $(2,13,4)$.  Notice that \polymake
starts the numbering from `0', and hence each index in the output is shifted by one.

\begin{small}
\begin{verbatim}
> $cell_point = poly2trop(new Polytope<Rational>(POINTS=>[$point]))->POINTS;
> print types($cell_point,$p->POINTS);
{1}
{0 2}
{3}
\end{verbatim}
\end{small}

It should be mentioned that there is also the \texttt{Maxplus toolbox for Scilab} for
computations in tropical convexity \cite{MaxplusToolboxScilab}.

\bibliographystyle{amsplain}
\bibliography{main}

\end{document}